\theoremstyle{definition}
\newtheorem{theorem}{Theorem}[section]
\newtheorem{lemma}[theorem]{Lemma}
\newtheorem{remark}[theorem]{Remark}
\numberwithin{equation}{section}
\newcommand{\Rmnum}[1]{\expandafter\@slowromancap\romannumeral #1@}
\begin{document}
	
	\title{Sobolev inequalities involving 2-tensor fields in manifolds with nonnegative sectional curvature}
	
	\author{
		Jie Wang 
		\thanks{E-mail: wangjie@whu.edu.cn, School of Mathematics and Statistics, Wuhan University, Wuhan, 430072, P.R.China. }
	}
	
	\date{}
	
	\maketitle
	
	\begin{abstract}
\noindent 	By applying the Alexandrov–Bakelman–Pucci (ABP) method, we establish both Log-Sobolev type inequality and Michael-Simon Sobolev inequality for smooth symmetric uniformly positive definite $\left( 0,2\right)$-tensor fields in manifolds with nonnegative sectional curvature. Our results extends the work in Yi-Zheng (Chinese Ann. Math. Ser. B, 45(3): 487–496. 2024) \cite{YZ} and Pham (Int. Math. Res. Not. IMRN, 6: 4656–4688. 2024) \cite{P}.
\\

\noindent\textbf{Keywords:} Log-Sobolev inequality, Michael-Simon Sobolev inequality, 
   	ABP method.
	\end{abstract}

	\section{Introduction}
	
In 2021, Brendle \cite{B1} made a great breakthrough in isoperimetric problem, where he established an elegant Michael-Simon Sobolev inequality which holds on submanifolds in Euclidean
space of arbitrary dimension and codimension. When the codimension is at most 2, this solves a long-standing conjecture that the
(sharp) isoperimetric inequality holds for minimal submanifolds in $\mathbb{R}^{n+1}$. Later, Brendle \cite{B3} extended the Michael-Simon type inequality as well as the isoperimetric inequality to minimal submanifolds in general ambient spaces with nonnegative sectional curvature. Moreover, Brendle \cite{B2} also proved a sharp Log-Sobolev inequality for submanifolds in Euclidean space.

The most significant feature in these works is the development of the so-called Alexandrov-Bakelman-Pucci technique. By the same method, Brendle’s works on Sobolev inequality or Log-Sobolev inequality have been extended to several different curvature settings by other researchers. Yi and Zheng \cite{YZ} obtained a Log-Sobolev inequality which holds for closed submanifolds in Riemannian manifolds with nonnegative sectional curvature. Later, this result was further improved to manifolds with asymptotically nonnegative sectional curvature in \cite{DLL2} and asymptotically nonnegative intermediate Ricci curvature in \cite{LR}. For Sobolev inequality, Johne \cite{J} considered the case of nonnegative Bakry-Émery Ricci curvature, Dong-Lin-Lu \cite{DLL1} considered the case of asymptotically nonnegative curvature, and Ma-Wu \cite{MW} considered the asymptotically nonnegative intermediate Ricci curvature condition.

	In 2018, Serre \cite{S} proved a new sharp Sobolev type inequality involving positive symmetric matrix-valued
functions in $\mathbb{R}^{n}$ and showed some applications to fluid dynamics. Pham \cite{P} removed the convexity condition of domain in Serre's result and obtained a Sobolev inequality involving a positive symmetric matrix-valued function on a smooth bounded domain in $\mathbb{R}^{n}$. 

Inspired by \cite{P}, some results on Michael-Simon type Sobolev inequality for symmetric positive definite $(0, 2)$-tensor fields have been established recently in \cite{WY} and \cite{WYZ}.

In this paper, by using the ABP method, we will present a new Log-Sobolev type inequality and a Michael-Simon Sobolev inequality for smooth symmetric uniformly positive definite $\left( 0,2\right)$-tensor fields in Riemannian manifolds with nonnegative sectional curvature.
	
	Let $(M,g)$ be a complete noncompact Riemannian manifold of dimension $l$ with nonnegative Ricci curvature. Denote by $\left| B^{l}\right| $ the volume of the unit ball in $\mathbb{R}^{l}$.
	The asymptotic volume ratio of $M$ is defined as
	\begin{equation*}  
		\theta:=\underset{r\rightarrow\infty} {\lim}\dfrac{\left\lbrace p\in M: d(p, q)\leq r\right\rbrace }{|B^{l}|r^{l}}
	\end{equation*}
	where $q$ is some fixed point on the manifold. According to the Bishop-Gromov volume comparison theorem, the limit exists and that $\theta\leq1$.
    
    One of our main results is the following theorem which extends Theorem 1.1 in \cite{YZ} to the $\left( 0,2\right)$-tensor setting.

    \begin{theorem}	\label{main1}
		Let $M$ be a complete, noncompact Riemannian manifold of dimension $n+m$ with nonnegative sectional curvature and Euclidean volume growth (i.e., $\theta>0$).
		Suppose that $\Sigma^{n}$ is a closed submanifold of $M^{n+m}$.
		Then for any smooth symmetric uniformly positive definite $\left( 0,2\right)$-tensor field $A$ on $\Sigma$, we have
		\begin{equation}  \label{logsobolev}
\begin{aligned}
			&\int_{\Sigma}\left(\operatorname{det}A\right)^{\frac{1}{n}}\left(\log \left(\operatorname{det}A\right)^{\frac{1}{n}} +n +\frac{n}{2}\log(4\pi)+\log\theta\right)dV(x)-\int_{\Sigma}\frac{|\operatorname{div_{\Sigma} }A|^{2}}{\left(\operatorname{det}A\right)^{\frac{1}{n}}}dV(x)\\
			&-\int_{\Sigma}\frac{|\left\langle A,\Rmnum{2}\right\rangle |^{2}}{\left(\operatorname{det}A\right)^{\frac{1}{n}}}dV(x)\leq \int_{\Sigma}\left(\operatorname{det}A\right)^{\frac{1}{n}}dV(x)\left(\log \int_{\Sigma}\left(\operatorname{det}A\right)^{\frac{1}{n}}dV(x)\right),
\end{aligned}
		\end{equation}
		where $\Rmnum{2}$ denotes the second fundamental form of $\Sigma$.
	\end{theorem}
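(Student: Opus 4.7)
The plan is to extend the Brendle--Yi--Zheng ABP transport scheme of \cite{YZ} to the tensor setting, with the scalar density $f$ replaced by $F:=(\det A)^{1/n}$ and the Laplacian replaced by the $A$-weighted operator $Lu:=\operatorname{div}_\Sigma(A\nabla_\Sigma u)$. Writing $\bar F:=\int_\Sigma F\,dV$, inequality \eqref{logsobolev} is equivalent to $\lambda\leq 0$, where $\lambda\in\mathbb R$ is the unique constant for which
\[
Lu \;=\; F\Bigl(\log F - \log\bar F + n + \tfrac{n}{2}\log(4\pi) + \log\theta\Bigr) - \frac{|\operatorname{div}_\Sigma A|^{2}}{F} - \frac{|\langle A,\mathrm{II}\rangle|^{2}}{F} - \lambda\,F
\]
has zero-mean right-hand side. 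Since $A$ is smooth and uniformly positive definite on the closed manifold $\Sigma$, the operator $L$ is uniformly elliptic with smooth coefficients, so Fredholm theory furnishes a smooth solution $u$, unique up to additive constant. It remains to show $\lambda\leq 0$ by an ABP/transport argument.

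To do so, fix a basepoint $q\in M$ and introduce the transport map $\Phi:T^\perp\Sigma\to M$, $\Phi(x,y)=\exp_x\bigl(\nabla_\Sigma u(x)+y\bigr)$, together with, for each $r>0$, a contact set $U_r\subset T^\perp\Sigma$ of pairs at which a prescribed parabolic lower barrier touches $u$ (exactly as in \cite{YZ}). Four standard ABP facts then hold: (i) $\Phi(U_r)$ covers most of a geodesic ball $B_{R(r)}(q)\subset M$ with $R(r)\sim r$; (ii) on $U_r$ the tangential matrix $I_n+A^{-1}\bigl(\nabla_\Sigma^{2} u+\langle\mathrm{II},y\rangle\bigr)$ is positive semidefinite and $\Phi$ is a local diffeomorphism with a pointwise Jacobian factorization into tangential and normal parts; (iii) nonnegative sectional curvature dominates the normal Jacobi-field factor by its Euclidean model; and (iv) Euclidean volume growth supplies $|\Phi(U_r)|\geq(\theta-o(1))\,|B^{n+m}|\,R(r)^{n+m}$ as $r\to\infty$.

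The key step is the Jacobian bound. In an $A$-orthonormal tangent frame, the matrix AM--GM applied to the positive definite tangential factor yields an $n$th-root determinant majorized by $1+\tfrac{1}{nF}\bigl(Lu-\langle\operatorname{div}_\Sigma A,\nabla_\Sigma u\rangle+\langle A,\mathrm{II}\rangle\cdot y\bigr)$. Raising to the $n$th power, multiplying by the dominated normal Jacobi factor, weighting by the Gaussian $(4\pi)^{-m/2}e^{-|y|^{2}/4}$, and integrating $y$ over $\mathbb R^{m}$ introduces the factor $(4\pi)^{n/2}$. Completing the square in $y$ converts the $\langle A,\mathrm{II}\rangle\cdot y$ cross term into $-|\langle A,\mathrm{II}\rangle|^{2}/F$, while a Cauchy--Schwarz/Young inequality applied to $\langle\operatorname{div}_\Sigma A,\nabla_\Sigma u\rangle$ against the residual $A$-norm of $\nabla_\Sigma u$ produces $-|\operatorname{div}_\Sigma A|^{2}/F$. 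Integrating over $U_r$, comparing with $|\Phi(U_r)|$, substituting for $Lu$ via the PDE, and sending $r\to\infty$ yields $\lambda\leq 0$.

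The main obstacle I anticipate is the precise bookkeeping in the Jacobian step: one must verify that the two cross terms $\langle\operatorname{div}_\Sigma A,\nabla_\Sigma u\rangle$ and $\langle A,\mathrm{II}\rangle\cdot y$ (coming respectively from $A$ not being parallel on $\Sigma$ and from the normal derivative of $\exp_x$) combine with the Gaussian integration in $y$ and the Cauchy--Schwarz in $\nabla_\Sigma u$ to reproduce the exact coefficients of the two negative squares appearing in \eqref{logsobolev}. This is the step where the tensor-valuedness of $A$ genuinely interacts with the extrinsic geometry of $\Sigma\subset M$, and where the proof goes beyond the scalar argument of \cite{YZ}.
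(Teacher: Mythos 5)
Your overall architecture matches the paper's: a $\lambda$-normalized auxiliary PDE in divergence form with $L u = \operatorname{div}_\Sigma(A\nabla^\Sigma u)$, contact sets $A_r$, the transport map $\Phi_r$, and a Jacobian estimate integrated against a Gaussian over the normal directions, followed by $r\to\infty$ to extract $\theta$. (Your reformulation via the constant $\lambda$ is equivalent to the paper's preliminary normalization of $A$, and the identity $Lu=\langle\operatorname{div}_\Sigma A,\nabla^\Sigma u\rangle+\langle A,\operatorname{Hess}^\Sigma u\rangle$ plays the same role in both.)

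The genuine gap is in the Jacobian step. You apply AM--GM to get a polynomial bound of the shape $\bigl(c + \tfrac{1}{nF}\langle A,\operatorname{Hess}^\Sigma u - \langle\mathrm{II},y\rangle\rangle\bigr)^n$, and then claim that Gaussian integration in $y$ plus ``completing the square'' reproduces the negative terms $-|\operatorname{div}_\Sigma A|^2/F$ and $-|\langle A,\mathrm{II}\rangle|^2/F$. This does not go through: with a polynomial in $y$ multiplying $e^{-|y|^2/4}$ there is no square to complete, and the binomial expansion of the $n$th power does not produce the density $F$ or the logarithm. The paper's mechanism (Lemmata 3.5 and 3.7, following Brendle) is to convert the polynomial bound into an exponential via $\lambda\le e^{\lambda-1}$, i.e.\ $\det(I+B)\le e^{\operatorname{tr}B}$ when $I+B\ge 0$. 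Only then does the Gaussian weight (and the identity $d(\bar x,\Phi_r)^2 = r^2(|\nabla^\Sigma u|^2+|y|^2)$ on the contact set) combine with the exponentiated AM--GM bound to cancel the positive quadratic terms, leave the completed negative square $-\tfrac14|2\langle A,\mathrm{II}\rangle/F+y|^2$, and let the $\log(\det A)^{1/n}$ in the PDE right-hand side exponentiate to the pointwise factor $(\det A)^{1/n}$. Without this exponential conversion your derivation has no way to extract the $\log$ structure of \eqref{logsobolev} from inside the $n$th power, and the two negative squares will not emerge with the right coefficients.

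Two smaller points to tighten. First, in the curved setting the transport map must carry the scale parameter, $\Phi_r(x,y)=\exp_x\bigl(r\nabla^\Sigma u(x)+ry\bigr)$; the unscaled map you wrote would not sweep out geodesic balls of radius $\sim r$, and the asymptotic volume ratio enters precisely by dividing by $r^{n+m}$ and sending $r\to\infty$. Second, the constant term you write as ``$1$'' inside the AM--GM bound should be $\operatorname{tr}_g A/(nF r)$ (which tends to $0$ after the rescaling and $r\to\infty$); an ``$A$-orthonormal frame'' does not normalize this away while keeping $F=(\det A)^{1/n}$ intact.
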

    
    \begin{remark}
   When $A$ is a conformal metric on $\Sigma$, i.e., $A=fg_{\Sigma}$ for some positive function $f$ on $\Sigma$, we have $\operatorname{div}_{\Sigma}A=df$, $\operatorname{det}A=f^{n}$, $A(\nu)=f$ and $\left\langle A,\Rmnum{2}\right\rangle=fH$, where $H$ is the mean curvature vector of $\Sigma$. In this special case, the above inequality is exactly the
results in \cite{YZ}.
    \end{remark}

   Another main result extends Theorem 1.8 in \cite{P} from Euclidean ambient space to the case that the ambient manifold has nonnegative sectional curvature.

	\begin{theorem}	\label{main2}
		Let $M$ be a complete, noncompact Riemannian manifold of dimension $n$ with nonnegative sectional curvature.
		Suppose that $D$ is a compact domain in $M$ with boundary $\partial D$.
		Then for any smooth symmetric uniformly positive definite $\left( 0,2\right)$-tensor field $A$ on $D$, we have
		\begin{equation} \label{sobolev}
			\int_{D}|\operatorname{div}A|+\int_{\partial D}A(\nu)\geq 
    		n\left| B^{n}\right|^{\frac{1}{n}}\theta^{\frac{1}{n}}
    		\left( \int_{\Sigma}\left( \mathrm{det}A\right)^{\frac{1}{n-1}}\right) ^{\frac{n-1}{n}},
		\end{equation}
		where $\nu$ is the unit outer normal vector field on $\partial D$.
	\end{theorem}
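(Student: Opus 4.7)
The strategy is the Alexandrov--Bakelman--Pucci method, combining Brendle's non-Euclidean framework~\cite{B3} (adapted in~\cite{YZ}) with Pham's $(0,2)$-tensor adaptation~\cite{P}. We may assume $\theta>0$ (otherwise \eqref{sobolev} is trivial) and, by a standard approximation, that $\partial D$ is smooth and $A$ is smooth and uniformly positive definite on $\bar D$. Set
$$T:=\int_D|\operatorname{div}A|\,dV+\int_{\partial D}A(\nu)\,dS,\qquad Q:=\int_D(\det A)^{1/(n-1)}\,dV,$$
both positive; the goal is to prove $T\ge n|B^n|^{1/n}\theta^{1/n}Q^{(n-1)/n}$.

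The first step is to produce an auxiliary function $u$ by solving a linear Neumann boundary value problem. Choose a measurable unit vector field $\tau$ on $D$ realising $\langle\tau,\operatorname{div}A\rangle=|\operatorname{div}A|$ a.e., and seek $u\in C^{1,\alpha}(\bar D)\cap W^{2,p}(D)$ satisfying
$$\operatorname{div}(A\nabla u)=\tfrac{T}{Q}(\det A)^{1/(n-1)}-\langle\tau,\operatorname{div}A\rangle\ \text{in }D,\qquad\langle A\nabla u,\nu\rangle=A(\nu,\nu)\ \text{on }\partial D.$$
Integration by parts reduces the Neumann compatibility to the tautology $T=\int_{\partial D}A(\nu)+\int_D|\operatorname{div}A|$, so by uniform ellipticity and standard $W^{2,p}$ theory such a $u$ exists for every $p<\infty$.

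The second step is the ABP contact analysis with a geodesic transport. Fix a basepoint $q_0\in D$ and, for $r>0$, define $\Phi_r(x):=\exp_x(r\nabla u(x))$ and the contact set $E_r$ of those $x$ at which $z\mapsto u(z)+\tfrac{1}{2r}d(z,\Phi_r(x))^2$ attains its global infimum over $\bar D$ at $x$. The Neumann condition, combined with a first-variation argument at $\partial D$, forces the infimum to be interior, and a standard barrier argument (cf.~\cite{YZ}) yields $B_r(q_0)\subset\Phi_r(E_r)$. On $E_r$ one has $|\nabla u|\le 1+O(1/r)$ (from $r|\nabla u(x)|=d(x,\Phi_r(x))\le\operatorname{diam}(D)+r$) and, by nonnegative sectional curvature, $\operatorname{Hess}u\succeq-\tfrac{1}{r}g$. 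Rauch comparison then gives the pointwise Jacobian bound $|\det D\Phi_r|\le\det(I+r\operatorname{Hess}u)$ on $E_r$. Applying AM--GM to the positive semidefinite matrix $A^{1/2}(I+r\operatorname{Hess}u)A^{1/2}$ produces
$$\det(I+r\operatorname{Hess}u)\le\frac{(\operatorname{tr}A+r\operatorname{tr}(A\operatorname{Hess}u))^n}{n^n\det A}.$$
Rewriting $\operatorname{tr}(A\operatorname{Hess}u)=\operatorname{div}(A\nabla u)-\langle\operatorname{div}A,\nabla u\rangle$ and using the PDE together with $\langle\tau,\operatorname{div}A\rangle=|\operatorname{div}A|$ and $|\nabla u|\le 1+O(1/r)$, the combined cross-term $\langle\operatorname{div}A,\tau+\nabla u\rangle$ collapses to an $O(1/r)|\operatorname{div}A|$ remainder, giving $r\operatorname{tr}(A\operatorname{Hess}u)\le\tfrac{rT}{Q}(\det A)^{1/(n-1)}+O(1)|\operatorname{div}A|$ on $E_r$. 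The area formula combined with $B_r(q_0)\subset\Phi_r(E_r)$ then yields $|B_r(q_0)|\le\bigl(\tfrac{T}{nQ}\bigr)^n r^n Q+O(r^{n-1})$; dividing by $r^n$ and sending $r\to\infty$ (where $|B_r(q_0)|/r^n\to\theta|B^n|$ by the definition of the asymptotic volume ratio) produces $\theta|B^n|\le T^n/(n^n Q^{n-1})$, which rearranges to \eqref{sobolev}.

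The principal obstacle is the clean bookkeeping of the divergence-to-trace reduction: the $\tau$-correction in the PDE is designed so that the cross-term $\langle\operatorname{div}A,\nabla u\rangle$, when combined with $\langle\tau,\operatorname{div}A\rangle=|\operatorname{div}A|$ on $E_r$, contributes only an $O(1/r)$ error (using $|\nabla u|\le 1+O(1/r)$), while the Rauch correction must itself be subleading in $r$ in order to preserve the sharp constant $n|B^n|^{1/n}\theta^{1/n}$. Pham's algebraic observation in the Euclidean case~\cite{P} provides the template, but the curved ambient setting requires verifying that these two corrections do not interact at the leading order, which is the main technical calculation.
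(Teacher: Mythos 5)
Your overall strategy matches the paper's: solve a linear Neumann problem whose right-hand side involves $(\det A)^{1/(n-1)}$ and $|\operatorname{div}A|$, run the ABP contact argument with the geodesic transport $\Phi_r(x) = \exp_x(r\nabla u(x))$, bound $|\det D\Phi_r|$ by $\det(g + r\operatorname{Hess}u)$ via Jacobi/Riccati comparison under nonnegative sectional curvature, apply AM--GM conjugated by $A^{1/2}$, and send $r\to\infty$ against the asymptotic volume ratio. The auxiliary unit field $\tau$ is harmless but redundant, since $\langle\tau,\operatorname{div}A\rangle=|\operatorname{div}A|$ pointwise, so your PDE coincides with the paper's; and your $O(1/r)$ bookkeeping in place of the paper's exact bound $|\nabla u|<1$ on the contact set (the paper restricts to $U=\{|\nabla u|<1\}$) is a workable if less clean variant.

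The genuine gap is the Neumann boundary condition. You impose $\langle A\nabla u,\nu\rangle = A(\nu,\nu)$ on $\partial D$, whereas the paper uses $\langle A\nabla u,\nu\rangle = |A(\nu)|$, the \emph{norm} of the vector $A(\nu)$. This is not merely notational. The ABP step must rule out contact points on $\partial D$ (the paper's Lemma 4.2): at a boundary minimum $x_0$ of $f(\cdot)=ru(\cdot)+\tfrac{1}{2}d(\cdot,p)^2$, one tests the inward direction $w=-A(\nu)/|A(\nu)|$ and gets
\begin{equation*}
\langle\nabla f,w\rangle \;=\; -r\,\frac{\langle A\nabla u,\nu\rangle}{|A(\nu)|}\;+\;d(x_0,p)\,\langle\nabla d,w\rangle \;\le\; -r\,\frac{\langle A\nabla u,\nu\rangle}{|A(\nu)|}+d(x_0,p).
\end{equation*}
With the paper's condition the prefactor is exactly $1$, producing $\langle\nabla f,w\rangle<0$ whenever $d(x_0,p)<r$, a contradiction. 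With your condition the prefactor is $A(\nu,\nu)/|A(\nu)|\le 1$, strictly less when $A(\nu)$ is not parallel to $\nu$, so the contradiction only holds for $d(x_0,p)<r\cdot A(\nu,\nu)/|A(\nu)|$; the transport $\Phi_r$ then only covers a ball of radius $\approx c_1 r$ with $c_1=\inf_{\partial D}A(\nu,\nu)/|A(\nu)|$, costing a factor $c_1^n$ in the final constant and failing to give \eqref{sobolev} as stated. (Relatedly, the Neumann compatibility condition fails with your choice if the boundary integrand in the normalization is $|A(\nu)|$ as in the paper.) Replacing your boundary condition by $\langle A\nabla u,\nu\rangle=|A(\nu)|$ repairs the argument; the rest of your outline then coincides in substance with the paper's.
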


To end the introduction, we want to say some words on the motivation of
our results. When studying geometry of submanifolds, the Newton tensor of the second fundamental form is an important symmetric $(0,2)$-tensor which  relates with higher order mean curvature. Moreover, it is divergence-free.
We believe that one can use the Newton tensor in the Sobolev inequalities for 2-tensor fields to gain some interesting information about submanifolds.

    The paper is organized as follows. In Section 2, we recall some basic definitions and properties for $2$-tensors and asymptotic volume ratio. In Section 3, we provide the proof of Theorem \ref{main1}. In Section 4, we give the proof of Theorem \ref{main2}.
    
	\section{Preliminaries}
    Let $(M, g)$ be a complete noncompact $N=(n+m)$-dimension manifold. Let $\Sigma$ be a compact $n$-dimensional submanifold of $M^{n+m}$ with smooth boundary $\partial\Sigma$ (possibly  $\partial\Sigma=\varnothing$). Denote by $\nabla$ the Levi-Civita connection on $(M, g)$, and by $R$ the Riemann curvature tensor of $(M, g)$. Assume that $g_{\Sigma}$ is the induced metric on $\Sigma$, $\nabla^{\Sigma}$ is the Levi-Civita connection on $\Sigma$.
    
    The \emph{asymptotic volume ratio} $\theta$ can be regarded as the ratio of the volume of geodesic ball in $M$ to the volume of Euclidean ball in $\mathbb{R}^{N}$ with same, arbitrary large radius. Precisely, the
asymptotic volume ratio $\theta$ is defined as
	\begin{equation*}  
		\theta:=\underset{r\rightarrow\infty} {\lim}\dfrac{\left\lbrace p\in M: d(p, q)\leq r\right\rbrace }{|B^{N}|r^{N}},
	\end{equation*}
    where $q\in M$ is some fixed point and $ B^{N} $ is the unit ball in $\mathbb{R}^{N}$. If $M$ has nonnegative Ricci curvature, then Bishop-Gromov volume comparison theorem indicates $\theta$ exists and  $\theta\leq1$.  
    
    Let $A$ be a smooth symmetric uniformly positive definite $\left( 0,2\right)$-tensor field on $\Sigma$.
    For each point $x\in\Sigma$, we denote by $T_{x}\Sigma$ and $T_{x}^{\bot} \Sigma$ the tangent and normal space to $\Sigma$ at $x$, respectively.
    Let $\left( x^{1}, \dots, x^{n}\right) $ be a local coordinate system on $\Sigma$,
    the divergence of $A$ on $\Sigma$ is defined by
    \begin{equation*}
    	\mathrm{div}_{\Sigma}A:=
    	g_{\Sigma}^{ki}\nabla^{\Sigma}_{k}A_{ij}dx^{j}.
    \end{equation*}
    Let $T$ and $S$ be two $\left(0,2\right)$-tensor fields on $\Sigma$. In general, the inner product of $T$ and $S$ can be written as
    \begin{equation*}
    	\left\langle T, S\right\rangle 
    	=g_{\Sigma}^{ik}g_{\Sigma}^{jl}T_{ij}S_{kl}
    	=T_{ij}S^{ij}.
    \end{equation*}
    The composition of $T$ and $S$ is the $\left(0,2\right)$-tensor $T\circ S$ defined by 
    \begin{equation*}
    (T\circ S)_{ij}=g^{kl}_{\Sigma}T_{ik}S_{lj}.
    \end{equation*}
    The $\mathrm{det}T$ signifies the determinant of $T$, 
    which is defined by the determinant of $\left(1,1\right)$-tensor $g_{\Sigma}^{ik}T_{jk}\frac{\partial}{\partial x^{i}}\otimes dx^{j}$. 
    When $T\circ S=g_{\Sigma}$, 
    we refer to $T$ as the inverse tensor of $S$ denoted by $T^{-1}$.
    Meanwhile, 
    $\Rmnum{2}$ denotes the second fundamental form of $\Sigma$ as defined by
     \begin{equation*}
     	\left\langle \Rmnum{2}\left( X,Y\right), Z\right\rangle :=
     	\left\langle \nabla_{X}Y, \nu\right\rangle =
     	-\left\langle \nabla_{X}\nu, Y\right\rangle,
     \end{equation*}
    where $X$ and $Y$ are tangent vector fields on $\Sigma$,
    $\nu$ is a normal vector field to $\Sigma$. 
    Further, $\left\langle A, \Rmnum{2}\right\rangle\left( x\right) $ is the normal vector at $x\in\Sigma$ defined by
    \begin{equation*}
    	\left\langle A, \Rmnum{2}\right\rangle \left( x\right) =
         g_{\Sigma}^{ik}g_{\Sigma}^{jl}A_{ij}\Rmnum{2}(\frac{\partial}{\partial x ^{k}},\frac{\partial}{\partial x ^{l}}).
    \end{equation*}

	\section{Proof of Theorem \ref{main1}}
    In this section, we prove the Log-Sobolev inequality for 2-tensor fields. Let $M$ be a complete, noncompact Riemannian manifold of dimension $n+m$ with nonnegative sectional curvature and Euclidean volume growth.
		Let $\Sigma^{n}$ be a compact submanifold of $M^{n+m}$ without boundary and $A$ be any given smooth symmetric uniformly positive definite $\left( 0,2\right)$-tensor field on $\Sigma$.

   We first consider the special case that $\Sigma$ is connected, which is needed for the existence of solutions to a certain PDE. It is easy to check that \eqref{logsobolev} is scaling invariant about $A$. Hence we may assume that
   \begin{equation}  
			\int_{\Sigma}\left(\operatorname{det}A\right)^{\frac{1}{n}}\log \left(\operatorname{det}A\right)^{\frac{1}{n}} -\int_{\Sigma}\frac{|\operatorname{div }A|^{2}}{\left(\operatorname{det}A\right)^{\frac{1}{n}}}-\int_{\Sigma}\frac{|\left\langle A,\Rmnum{2}\right\rangle |^{2}}{\left(\operatorname{det}A\right)^{\frac{1}{n}}}=0,
		\end{equation}
		and the goal becomes to prove 
		       \begin{equation}\label{goal}
       \int_{\Sigma}\left(\operatorname{det}A\right)^{\frac{1}{n}}\left(n+\frac{n}{2}\log(4\pi)+\log\theta\right)\leq \int_{\Sigma}\left(\operatorname{det}A\right)^{\frac{1}{n}}\log\left( \int_{\Sigma}\left(\operatorname{det}A\right)^{\frac{1}{n}}\right).
       \end{equation}
   
   In order to apply the ABP method, we consider a certain partial differential equation as follows:
      \begin{equation}  \label{auxilrayPDE}
			\operatorname{div }_{\Sigma}(A(\nabla^{\Sigma}u))=\left(\operatorname{det}A\right)^{\frac{1}{n}}\log \left(\operatorname{det}A\right)^{\frac{1}{n}} -\frac{|\operatorname{div }A|^{2}}{\left(\operatorname{det}A\right)^{\frac{1}{n}}}-\frac{|\left\langle A,\Rmnum{2}\right\rangle |^{2}}{\left(\operatorname{det}A\right)^{\frac{1}{n}}} \quad\quad \text{on}\quad \Sigma,
		\end{equation}
   where $\nabla^{\Sigma}$ denotes the induced Levi-Civita connection on $\Sigma$. Note that $\partial \Sigma=\varnothing$, so we need not to specify  a boundary condition. By standard elliptic PDE theory, there exists a smooth solution $u:\Sigma\longrightarrow \mathbb{R}$ to \eqref{auxilrayPDE} (cf. Theorem 6.30 in \cite{GT}).
   
       As in \cite{B3}, we define a contact set $A_{r}$ for a fixed positive number $r$ as the set of all points $(\bar{x},\bar{y})\in T^{\perp}\Sigma$ with the property that
        \begin{equation}
        ru\left( x\right) +\frac{1}{2}d\left( x,\mathrm{exp}_{\bar{x}}\left( r\nabla^{\Sigma}u\left( \bar{x}\right)+r\bar{y}\right) \right) ^{2}
        \geq ru\left(\bar{x}\right) +\frac{1}{2}r^{2} \left( \left| \nabla^{\Sigma}u\left( \bar{x}\right)\right| ^{2}+\left| \bar{y}\right| ^{2} \right),\;\forall x\in \Sigma.
        \end{equation} 

Let $\Phi_{r}:T^{\perp}\Sigma\longrightarrow M$ be the transport map defined by
    \begin{equation}
    	\Phi_{r}\left( x, y\right) =\mathrm{exp}_{x}\left( r\nabla^{\Sigma}u\left( x\right)+ry\right) ,\quad \forall x\in \Sigma, y\in T^{\perp}_{x}\Sigma.
    \end{equation}

 \begin{lemma}[Lemma 3.2 in \cite{YZ}] \label{containset}
	$M$ is contained in the set $\Phi_{r}\left( A_{r}\right)$, for each $0<r<\infty$.
    \end{lemma}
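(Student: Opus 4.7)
Given any $p\in M$, the plan is to exhibit a pair $(\bar x,\bar y)\in T^{\perp}\Sigma$ that both lies in the contact set $A_{r}$ and satisfies $\Phi_{r}(\bar x,\bar y)=p$. Following the standard ABP strategy used in \cite{B1,B3,YZ}, I would introduce the auxiliary function
\[
f(x)\;=\;r\,u(x)\;+\;\tfrac{1}{2}\,d(x,p)^{2},\qquad x\in\Sigma .
\]
Since $\Sigma$ is closed, $f$ attains its minimum at some $\bar x\in\Sigma$. Choose a minimizing geodesic from $\bar x$ to $p$ with initial velocity $v\in T_{\bar x}M$, so that $p=\exp_{\bar x}(v)$ and $|v|=d(\bar x,p)$, and decompose orthogonally $v=v^{T}+v^{\perp}$ with $v^{T}\in T_{\bar x}\Sigma$ and $v^{\perp}\in T^{\perp}_{\bar x}\Sigma$.

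Using the standard identity $\nabla^{\Sigma}_{x}\bigl[\tfrac{1}{2}d(x,p)^{2}\bigr]\big|_{x=\bar x}=-v^{T}$, the first-variation condition at the minimizer $\bar x$ forces
\[
r\,\nabla^{\Sigma}u(\bar x)\;=\;v^{T}.
\]
Setting $\bar y:=v^{\perp}/r\in T^{\perp}_{\bar x}\Sigma$, we obtain $r\nabla^{\Sigma}u(\bar x)+r\bar y=v$, and hence $\Phi_{r}(\bar x,\bar y)=\exp_{\bar x}(v)=p$. To check the membership $(\bar x,\bar y)\in A_{r}$, I would use orthogonality $v^{T}\perp v^{\perp}$ to write
\[
d(\bar x,p)^{2}\;=\;|v|^{2}\;=\;|v^{T}|^{2}+|v^{\perp}|^{2}\;=\;r^{2}\bigl(|\nabla^{\Sigma}u(\bar x)|^{2}+|\bar y|^{2}\bigr),
\]
so that the minimality of $\bar x$ gives, for every $x\in\Sigma$,
\[
ru(x)+\tfrac{1}{2}d(x,p)^{2}\;\geq\;ru(\bar x)+\tfrac{1}{2}d(\bar x,p)^{2}\;=\;ru(\bar x)+\tfrac{1}{2}r^{2}\bigl(|\nabla^{\Sigma}u(\bar x)|^{2}+|\bar y|^{2}\bigr),
\]
which is exactly the defining inequality for $A_{r}$. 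Thus $p\in\Phi_{r}(A_{r})$.

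The only delicate step I anticipate is the first-variation computation, because $x\mapsto\tfrac{1}{2}d(x,p)^{2}$ need not be smooth along the cut locus of $p$. This is not a serious obstacle: the squared distance is locally semiconcave on $M\setminus\{p\}$, and whenever a minimizing geodesic from $x$ to $p$ is selected, the vector $-v^{T}$ represents the tangential part of an element of its (Clarke) subdifferential. At an interior minimum of $f|_{\Sigma}$, the corresponding subdifferential inclusion $0\in r\nabla^{\Sigma}u(\bar x)-v^{T}$ is enough to conclude $r\nabla^{\Sigma}u(\bar x)=v^{T}$ for a suitable choice of minimizing geodesic. Everything else reduces to the elementary orthogonal decomposition above.
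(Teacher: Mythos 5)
Your argument is correct and mirrors the paper's proof: both minimize $f(x)=ru(x)+\tfrac12 d(x,p)^2$ over the closed submanifold $\Sigma$ and use the first-variation condition at the minimizer $\bar x$ to produce a pair $(\bar x,\bar y)$ with $\Phi_r(\bar x,\bar y)=p$ and the defining inequality of $A_r$. The only divergence is in the technical handling of the cut-locus subtlety you flag at the end: whereas you invoke semiconcavity of $d^2(\cdot,p)$ and a Clarke-subdifferential inclusion, the paper (following Brendle \cite{B3}) instead observes that the minimizing geodesic $\bar\gamma$ from $\bar x$ to $p$ also minimizes the smooth energy functional $ru(\gamma(0))+\tfrac12\int_0^1|\gamma'(t)|^2\,dt$ over paths with $\gamma(0)\in\Sigma$, $\gamma(1)=p$, so the classical first-variation formula of energy applies directly and the cut-locus regularity issue never arises.
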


    \begin{proof}
    For the readers' convenience, we briefly sketch its proof here. For $\forall p\in M$, consider the function $f(x):=ru(x)+\frac{1}{2}d^{2}(x,p)$. Since $\Sigma$ is closed, then $f$ must attain its minimum at some point on $\Sigma$, say $\bar{x}$. Let $\bar{\gamma}$ be a minimizing geodesic on $M$ with $\bar{\gamma}(0)=\bar{x}$ and $\bar{\gamma}(1)=p$. Then one can check that the geodesic $\bar{\gamma}$ also minimizes the functional 
    $$ru(\gamma(0)+\frac{1}{2}\int_{0}^{1}|\gamma^{\prime}(t)|^{2}dt$$ among all smooth paths $\gamma$ with $\gamma(0)\in \Sigma$ and $\gamma(1)=p$. By the first variation formula of energy, we get 
    \[
    r\nabla^{\Sigma}u(\bar{x})-\bar{\gamma}^{\prime}(0)\in T^{\perp}_{\bar{x}}\Sigma.
    \]
    Thus, there exists $\bar{y}\in T^{\perp}_{\bar{x}}\Sigma$ such that 
    \[
     r\nabla^{\Sigma}u(\bar{x})+r\bar{y}=\bar{\gamma}^{\prime}(0).
    \]
    We can easily verify that $(\bar{x},\bar{y})\in A_{r}$, and for this $(\bar{x},\bar{y})$, it holds $\Phi_{r}(\bar{x},\bar{y})=p$.
    \end{proof}

   We also need the following three lemmata in \cite{B3}. 
    
    \begin{lemma}
    	(Lemma 4.3 in \cite{B3})	
     Suppose that $\left( \bar{x},\bar{y}\right)\in A_{r}$ and
     let $\bar{\gamma}\left( t\right):=\mathrm{exp}_{\bar{x}}
     \left( rt\nabla^{\Sigma}u\left( \bar{x}\right)+rt\bar{y}\right)$ 
     for $t\in \left[ 0, 1\right]$. If $Z$ is a vector field along $\bar{\gamma}$ satisfying $Z\left( 0\right)\in T_{x}\Sigma$ and $Z\left( 1\right) =0$, then
    	\begin{equation*}
    	\begin{split}	
         r\left( \nabla^{2}_{\Sigma}u\right) \left(Z\left( 0\right), Z\left( 0\right) \right)-r\left\langle\Rmnum{2}\left(Z\left( 0\right), Z\left( 0\right)\right), \bar{y} \right\rangle \\ +\int_{0}^{1}\left| \nabla_{t}Z\left( t\right) \right| ^{2}
         -R\left(\bar{\gamma}\left( t\right), Z\left( t\right) , \bar{\gamma}\left( t\right), Z\left( t\right) \right) \geq0.
        \end{split}	
    	\end{equation*}		
    \end{lemma}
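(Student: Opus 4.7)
The plan is to prove the inequality as a second variation argument for the ``constrained energy'' functional
\[
F(\gamma)\;:=\;ru(\gamma(0))\;+\;\tfrac{1}{2}\int_{0}^{1}|\gamma'(t)|^{2}\,dt
\]
acting on smooth paths $\gamma:[0,1]\to M$ with $\gamma(0)\in\Sigma$ and $\gamma(1)=p$, where $p:=\Phi_{r}(\bar x,\bar y)=\bar\gamma(1)$. First, I would verify that $\bar\gamma$ is an absolute minimizer of $F$. For any admissible $\gamma$, Cauchy--Schwarz gives $\tfrac12\int_{0}^{1}|\gamma'|^{2}\ge \tfrac12 d(\gamma(0),p)^{2}$, and the defining property of the contact set $A_{r}$ then yields $F(\gamma)\ge ru(\bar x)+\tfrac12 r^{2}(|\nabla^{\Sigma}u(\bar x)|^{2}+|\bar y|^{2})$; equality is attained at $\gamma=\bar\gamma$ because $\bar\gamma$ is a geodesic of length $r\sqrt{|\nabla^{\Sigma}u(\bar x)|^{2}+|\bar y|^{2}}$ joining $\bar x$ to $p$.

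Next, given an admissible variation field $Z$ along $\bar\gamma$ with $Z(0)\in T_{\bar x}\Sigma$ and $Z(1)=0$, I would build a smooth variation $\gamma_{s}$ of $\bar\gamma$ whose endpoint curve $\eta(s):=\gamma_{s}(0)$ lies in $\Sigma$ with $\eta(0)=\bar x$, $\eta'(0)=Z(0)$, which is constant equal to $p$ at $t=1$, and whose variation field at $s=0$ is exactly $Z$. (Any standard construction works: e.g. take $\eta$ to be a $\Sigma$-curve realizing $Z(0)$, and extend $Z$ to a field on the variation using parallel transport plus a bump in $t$.) Since $s=0$ minimizes $F(\gamma_{s})$, the second derivative $F''(0)$ must be nonnegative.

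Finally, I would compute $F''(0)$ and match it to the claimed expression. The classical second variation of energy along the geodesic $\bar\gamma$, applied with $\gamma_{s}(1)\equiv p$ (so the $t=1$ boundary term vanishes), gives
\[
\tfrac12\frac{d^{2}}{ds^{2}}\!\!\int_{0}^{1}|\gamma_{s}'|^{2}\Big|_{s=0}\;=\;\int_{0}^{1}\!\big(|\nabla_{t}Z|^{2}-R(\bar\gamma',Z,\bar\gamma',Z)\big)dt\;-\;\big\langle\nabla_{s}\partial_{s}\gamma_{s}\big|_{s=0,t=0},\,\bar\gamma'(0)\big\rangle.
\]
On the other hand $\frac{d^{2}}{ds^{2}}ru(\eta(s))|_{s=0}=r(\nabla^{2}_{\Sigma}u)(Z(0),Z(0))+r\langle\nabla^{\Sigma}u(\bar x),\nabla^{\Sigma}_{s}\eta'(0)\rangle$. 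Decomposing the ambient acceleration $\nabla_{s}\eta'(0)$ into its tangential and normal parts, the normal part is precisely $\mathrm{II}(Z(0),Z(0))$ by definition of the second fundamental form; substituting $\bar\gamma'(0)=r\nabla^{\Sigma}u(\bar x)+r\bar y$ makes the tangential acceleration contributions cancel, leaving exactly
\[
F''(0)\;=\;r(\nabla^{2}_{\Sigma}u)(Z(0),Z(0))-r\langle \mathrm{II}(Z(0),Z(0)),\bar y\rangle+\int_{0}^{1}\!\big(|\nabla_{t}Z|^{2}-R(\bar\gamma',Z,\bar\gamma',Z)\big)dt\;\ge\;0,
\]
which is the desired inequality. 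The only genuine subtlety here is the bookkeeping of the free boundary term at $t=0$ and the cancellation that identifies the $\mathrm{II}$-contribution; everything else is a direct application of the first and second variation formulas.
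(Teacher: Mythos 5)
The paper does not give its own proof of this lemma; it is stated as a citation of Lemma~4.3 in Brendle's paper \cite{B3}, whose argument is precisely the second-variation argument you give. Your proof is correct: the minimality of $\bar\gamma$ for the constrained energy functional follows from the contact-set inequality plus Cauchy--Schwarz exactly as you say, the standard second variation formula for energy with the fixed endpoint at $t=1$ leaves only the free $t=0$ boundary term $-\langle\nabla_s\partial_s\gamma_s,\bar\gamma'(0)\rangle$, and decomposing $\nabla_s\eta'(0)$ into tangential part plus $\mathrm{II}(Z(0),Z(0))$ and pairing against $\bar\gamma'(0)=r\nabla^{\Sigma}u(\bar x)+r\bar y$ produces exactly the cancellation you describe, leaving the stated nonnegative quantity. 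The only point left implicit---that any piecewise-smooth $Z$ with $Z(0)\in T_{\bar x}\Sigma$ and $Z(1)=0$ is realizable as the variation field of an admissible variation $\gamma_s$ with $\gamma_s(0)\in\Sigma$ and $\gamma_s(1)\equiv p$---is indeed a routine construction (for instance $\gamma_s(t)=\exp_{\bar\gamma(t)}\bigl(\phi(t)V_s(t)+s(Z(t)-\phi(t)P_t Z(0))\bigr)$, where $P_t$ is parallel transport along $\bar\gamma$, $V_s(t)$ parallel-transports $\exp_{\bar x}^{-1}(\exp^{\Sigma}_{\bar x}(sZ(0)))$, and $\phi$ is a bump with $\phi(0)=1$, $\phi(1)=0$), so the proof is complete as outlined.
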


     \begin{lemma}
    (Lemma 4.4 in \cite{B3})
      For $\left( \bar{x},\bar{y}\right)\in A_{r}$, we have 
      $g+r\operatorname{Hess}^{\Sigma}u\left(\bar{x}\right)-r\left\langle\Rmnum{2}\left( \bar{x}\right),\bar{y}\right\rangle \geq0$.
     \end{lemma}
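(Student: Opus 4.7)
The plan is to derive this tensor inequality as a direct consequence of the preceding lemma by inserting a carefully chosen test vector field along the geodesic $\bar{\gamma}(t)=\mathrm{exp}_{\bar{x}}(rt\nabla^{\Sigma}u(\bar{x})+rt\bar{y})$. Fix an arbitrary tangent vector $v\in T_{\bar{x}}\Sigma$ and set
\begin{equation*}
Z(t)=(1-t)\,P_{t}(v),
\end{equation*}
where $P_{t}$ denotes parallel transport along $\bar{\gamma}$ from $\bar{\gamma}(0)=\bar{x}$ to $\bar{\gamma}(t)$. By construction $Z(0)=v\in T_{\bar{x}}\Sigma$ and $Z(1)=0$, so $Z$ is admissible in the previous lemma.

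Next I would evaluate the two pieces appearing under the integral. Since $P_{t}(v)$ is parallel along $\bar{\gamma}$, its covariant derivative vanishes and one obtains $\nabla_{t}Z(t)=-P_{t}(v)$, hence $|\nabla_{t}Z(t)|^{2}=|v|^{2}$ for every $t$ and $\int_{0}^{1}|\nabla_{t}Z|^{2}\,dt=|v|^{2}$. For the curvature term I would invoke the nonnegative sectional curvature hypothesis on $M$:
\begin{equation*}
R(\bar{\gamma}'(t),Z(t),\bar{\gamma}'(t),Z(t))=(1-t)^{2}\,R(\bar{\gamma}'(t),P_{t}(v),\bar{\gamma}'(t),P_{t}(v))\geq 0,
\end{equation*}
so the contribution of $-R$ integrates to something nonpositive.

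Feeding these computations into the conclusion of the previous lemma yields
\begin{equation*}
r(\nabla^{2}_{\Sigma}u)(v,v)-r\langle\Rmnum{2}(v,v),\bar{y}\rangle+|v|^{2}\;\geq\;\int_{0}^{1}R(\bar{\gamma}',Z,\bar{\gamma}',Z)\,dt\;\geq\;0,
\end{equation*}
which is precisely the statement that $g+r\operatorname{Hess}^{\Sigma}u(\bar{x})-r\langle\Rmnum{2}(\bar{x}),\bar{y}\rangle$ is nonnegative when evaluated on $(v,v)$. Since $v\in T_{\bar{x}}\Sigma$ is arbitrary, the desired semidefiniteness follows.

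There is essentially no obstacle here: the argument is a one-line substitution into the preceding lemma. The only creative input is the ansatz $Z(t)=(1-t)P_{t}(v)$, which is the canonical choice that simultaneously realizes the required boundary condition $Z(1)=0$ and returns the flat model kinetic term $|v|^{2}$ after integration; the sectional curvature hypothesis then absorbs the Jacobi-type correction with the correct sign.
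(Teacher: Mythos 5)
Your proof is correct and follows exactly the standard argument: the paper itself cites this as Lemma 4.4 in \cite{B3} without reproving it, but your choice of test field $Z(t)=(1-t)P_t(v)$ plugged into the preceding variational inequality, combined with the nonnegative sectional curvature hypothesis, is precisely Brendle's argument and also mirrors verbatim the proof the paper does spell out for the analogous Lemma \ref{positived} in Section 4.
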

 
     \begin{lemma}
  	  (Lemma 4.5 in \cite{B3})	
    Suppose that $\left( \bar{x},\bar{y}\right)\in A_{r}$ and
  	let $\bar{\gamma}\left( t\right):=\mathrm{exp}_{\bar{x}}
  	\left( rt\nabla^{\Sigma}u\left( \bar{x}\right)+rt\bar{y}\right)$ 
  	for $t\in \left[ 0, 1\right]$.  
  	Moreover, let $\left\lbrace e_{1}, \ldots, e_{n}\right\rbrace$ be an orthonormal basis of $T_{\bar{x}}\Sigma$.
  	Suppose that $W$ is a Jacobi field along $\bar{\gamma}$ satisfying  $W\left( 0\right)\in T_{\bar{x}}\Sigma$ and
  	$\left\langle \nabla_{t}W\left( 0\right), e_{j}\right\rangle=
  	r\left( \operatorname{Hess}^{\Sigma}u\right) \left(W\left( 0\right), e_{j}\right)-r\left\langle\Rmnum{2}\left(W\left( 0\right), e_{j}\right),\bar{y}\right\rangle$
  	for each $1\leq j\leq n$.
  	If $W\left( \tau\right)=0$ for some $0<\tau<1$, 
  	then $W$ vanishes identically.
     \end{lemma}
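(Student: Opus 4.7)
The plan is to apply Lemma 4.3 to a ``cut--and--paste'' extension of $W$ beyond $t=\tau$, extract an equality in the resulting variational inequality, and then promote that equality to vanishing by a first--order perturbation.

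First, define the continuous, piecewise smooth vector field $Z$ along $\bar{\gamma}$ by setting $Z(t)=W(t)$ for $t\in[0,\tau]$ and $Z(t)=0$ for $t\in[\tau,1]$. The assumption $W(\tau)=0$ makes $Z$ continuous (even $H^{1}$), with $Z(0)=W(0)\in T_{\bar{x}}\Sigma$ and $Z(1)=0$. Although $Z$ is only piecewise smooth, Lemma 4.3 extends to this class by a standard mollification that preserves the endpoint data. Applying it and splitting the integral at $t=\tau$ yields
\[
r(\operatorname{Hess}^{\Sigma}u)(W(0),W(0))-r\langle\Rmnum{2}(W(0),W(0)),\bar{y}\rangle+\int_{0}^{\tau}\bigl(|\nabla_{t}W|^{2}-R(\bar{\gamma}',W,\bar{\gamma}',W)\bigr)\,dt\geq 0.
\]
Next, integrating by parts and using the Jacobi equation $\nabla_{t}\nabla_{t}W+R(W,\bar{\gamma}')\bar{\gamma}'=0$ (the boundary term at $t=\tau$ vanishes because $W(\tau)=0$) collapses the integral to $-\langle\nabla_{t}W(0),W(0)\rangle$. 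Expanding $W(0)\in T_{\bar{x}}\Sigma$ in the basis $\{e_{j}\}$ and substituting the hypothesized value of $\langle\nabla_{t}W(0),e_{j}\rangle$ gives
\[
\langle\nabla_{t}W(0),W(0)\rangle=r(\operatorname{Hess}^{\Sigma}u)(W(0),W(0))-r\langle\Rmnum{2}(W(0),W(0)),\bar{y}\rangle,
\]
so the two sides of the displayed inequality cancel exactly and equality holds.

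The main obstacle is promoting this equality to $W\equiv 0$. My plan is a first--order perturbation: for any smooth vector field $\eta$ along $\bar{\gamma}$ with $\eta(0)=\eta(1)=0$, the competitor $Z+\epsilon\eta$ has the same endpoint data as $Z$, so Lemma 4.3 applied to $Z+\epsilon\eta$ produces a quadratic polynomial in $\epsilon$ that is nonnegative and vanishes at $\epsilon=0$. Its linear coefficient must therefore be zero, giving
\[
\int_{0}^{1}\bigl(\langle\nabla_{t}Z,\nabla_{t}\eta\rangle-R(\bar{\gamma}',Z,\bar{\gamma}',\eta)\bigr)\,dt=0
\]
for every such $\eta$. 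Thus $Z$ is a weak, hence classical, Jacobi field on $[0,1]$. Since $Z\equiv 0$ on $[\tau,1]$, uniqueness of Jacobi fields forces $Z\equiv 0$ on $[0,1]$; in particular $W\equiv 0$ on $[0,\tau]$, and then, $W$ itself being Jacobi on $[0,1]$ with zero Cauchy data at $t=\tau$, it vanishes identically. The only non-routine step is this perturbation, which rests on the fact that the inequality of Lemma 4.3 arises from minimising a smooth functional at $(\bar{x},\bar{y})\in A_{r}$, so the induced quadratic form does satisfy the usual first--order condition at its minimiser.
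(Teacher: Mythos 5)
Your proof is correct, and it recovers the argument Brendle gives for Lemma 4.5 in \cite{B3} (the paper you are reviewing merely cites that lemma and gives no proof of its own). The cut-and-paste field $Z$, the integration by parts showing $Q(Z)=0$ under the prescribed initial condition on $\nabla_t W(0)$, and the first-order variation argument forcing the Euler--Lagrange condition are exactly the standard route. Two cosmetic remarks: (i) when you integrate by parts in the first-order variation against an $\eta$ with $\eta(0)=\eta(1)=0$, the conclusion can be read off directly as the transmission condition $\langle \nabla_t W(\tau^-),\eta(\tau)\rangle=0$ for all $\eta$, i.e.\ $\nabla_t W(\tau)=0$, which avoids the slightly indirect ``weak Jacobi field, hence classical'' phrasing; (ii) once you have $W(\tau)=\nabla_t W(\tau)=0$, uniqueness of solutions to the Jacobi ODE already yields $W\equiv 0$ on $[0,1]$, so the detour through $Z\equiv 0$ is not needed. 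Neither affects correctness.
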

    
    The next estimate is key to our proof.
    
    \begin{lemma}\label{keyestimate}
    For $(\bar{x},\bar{y})\in A_{r}$, we have 
    \begin{equation}
		\begin{aligned}
		    \left\langle A(\bar{x}),\operatorname{Hess}^{\Sigma}u(\bar{x})-\langle\Rmnum{2}(\bar{x}),\bar{y}  \rangle \right\rangle&\leq\frac{\left(\operatorname{det}A\right)^{\frac{1}{n}}\left(|\nabla^{\Sigma}u(\bar{x})|^{2}+|\bar{y}|^{2}  \right)}{4}+ \left(\operatorname{det}A\right)^{\frac{1}{n}}\log \left(\operatorname{det}A\right)^{\frac{1}{n}} \\
		    &\quad\; -\frac{\left|2\langle A(\bar{x}),\Rmnum{2}(\bar{x})\rangle+\left(\operatorname{det}A\right)^{\frac{1}{n}}\cdot \bar{y}\right|^{2}}{4\left(\operatorname{det}A\right)^{\frac{1}{n}}}.
		\end{aligned}
    \end{equation}
    \end{lemma}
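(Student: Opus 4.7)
The plan is to reduce the claimed pointwise inequality to Young's inequality by using the PDE \eqref{auxilrayPDE} to rewrite $\langle A,\operatorname{Hess}^{\Sigma}u\rangle$ algebraically, and then completing the square in the $\nabla^{\Sigma}u$-terms. The contact set condition on $(\bar{x},\bar{y})$ enters only through the implicit requirement that $\bar{y}\in T^{\perp}_{\bar{x}}\Sigma$; none of Lemmas~4.3--4.5 of \cite{B3} is needed at this stage (those lemmata will be used later in the Jacobian estimate of the transport map $\Phi_{r}$).

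First, I would expand
\begin{equation*}
\operatorname{div}_{\Sigma}(A(\nabla^{\Sigma}u))=\langle A,\operatorname{Hess}^{\Sigma}u\rangle+\langle \operatorname{div}_{\Sigma}A,\nabla^{\Sigma}u\rangle,
\end{equation*}
and invoke \eqref{auxilrayPDE} to solve for $\langle A,\operatorname{Hess}^{\Sigma}u\rangle$. A direct computation in local coordinates, using the definitions in Section~2, also yields the adjointness identity
\begin{equation*}
\langle A(\bar{x}),\langle\Rmnum{2}(\bar{x}),\bar{y}\rangle\rangle=\langle\langle A(\bar{x}),\Rmnum{2}(\bar{x})\rangle,\bar{y}\rangle,
\end{equation*}
where the right-hand side is an inner product in $T^{\perp}_{\bar{x}}\Sigma$. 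Combining these expresses the left-hand side of the lemma explicitly in terms of the PDE data, $\langle \operatorname{div}_{\Sigma}A,\nabla^{\Sigma}u\rangle$ and $\langle\langle A,\Rmnum{2}\rangle,\bar{y}\rangle$.

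On the target right-hand side I would expand
\begin{equation*}
\bigl|2\langle A,\Rmnum{2}\rangle+(\operatorname{det}A)^{1/n}\bar{y}\bigr|^{2}=4|\langle A,\Rmnum{2}\rangle|^{2}+4(\operatorname{det}A)^{1/n}\langle\langle A,\Rmnum{2}\rangle,\bar{y}\rangle+(\operatorname{det}A)^{2/n}|\bar{y}|^{2}.
\end{equation*}
After substitution in both sides, the $(\operatorname{det}A)^{1/n}\log(\operatorname{det}A)^{1/n}$, $|\langle A,\Rmnum{2}\rangle|^{2}/(\operatorname{det}A)^{1/n}$, $\langle\langle A,\Rmnum{2}\rangle,\bar{y}\rangle$, and $|\bar{y}|^{2}$ contributions cancel, and the inequality collapses to
\begin{equation*}
\tfrac{(\operatorname{det}A)^{1/n}|\nabla^{\Sigma}u|^{2}}{4}+\langle \operatorname{div}_{\Sigma}A,\nabla^{\Sigma}u\rangle+\tfrac{|\operatorname{div}A|^{2}}{(\operatorname{det}A)^{1/n}}\geq 0,
\end{equation*}
which is precisely the expansion of $\bigl|\tfrac{1}{2}(\operatorname{det}A)^{1/(2n)}\nabla^{\Sigma}u+(\operatorname{det}A)^{-1/(2n)}\operatorname{div}_{\Sigma}A\bigr|^{2}\geq 0$.

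The argument therefore contains no analytic obstacle; the main work is bookkeeping. One must carefully track three different inner products (of $(0,2)$-tensors on $\Sigma$, of tangent vectors, and of normal vectors in $T^{\perp}\Sigma$), and recognize that the correct completion of the square pairs $\nabla^{\Sigma}u$ with $\operatorname{div}_{\Sigma}A$ weighted by $(\operatorname{det}A)^{\pm 1/(2n)}$. The right-hand side of \eqref{auxilrayPDE} is engineered precisely so that, after this cancellation, the residue is a perfect square.
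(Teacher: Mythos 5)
Your proposal is correct and follows essentially the same route as the paper: rewrite $\langle A,\operatorname{Hess}^{\Sigma}u\rangle$ via the PDE, use the adjointness $\langle A,\langle\Rmnum{2},\bar y\rangle\rangle=\langle\langle A,\Rmnum{2}\rangle,\bar y\rangle$, and complete the square in the $\nabla^{\Sigma}u$-terms. The paper presents the $\bar y$-terms as a second completed square which enters as an exact identity, while you observe directly that those terms cancel against the right-hand side; this is the same algebra, differently bookkept.
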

    
    \begin{proof}
    Note that 
    $$\operatorname{div}_{\Sigma}(A(\nabla^{\Sigma}u))=\langle \operatorname{div}_{\Sigma}A,\nabla^{\Sigma} u\rangle+\langle A,\operatorname{Hess}^{\Sigma}u \rangle.  $$
    Comnining the above identity with the equation of $u$, we get 
    \[
    \begin{aligned}
    &\left\langle A(\bar{x}),\operatorname{Hess}^{\Sigma}u(\bar{x})-\langle\Rmnum{2}(\bar{x}),\bar{y}  \rangle \right\rangle \\
    &=\operatorname{div}_{\Sigma}(A(\nabla^{\Sigma}u))-\langle \operatorname{div}_{\Sigma}A,\nabla^{\Sigma} u\rangle-\langle A, \langle\Rmnum{2},\bar{y}\rangle\rangle\\
    &=\left(\operatorname{det}A\right)^{\frac{1}{n}}\log \left(\operatorname{det}A\right)^{\frac{1}{n}} -\frac{|\operatorname{div }_{\Sigma}A|^{2}}{\left(\operatorname{det}A\right)^{\frac{1}{n}}}-\frac{|\left\langle A,\Rmnum{2}\right\rangle |^{2}}{\left(\operatorname{det}A\right)^{\frac{1}{n}}}-\langle \operatorname{div}_{\Sigma}A,\nabla^{\Sigma} u\rangle-\langle A, \langle\Rmnum{2},\bar{y}\rangle\rangle.
    \end{aligned}
    \] 
    
    By direct calculations, we have
    \begin{equation}\label{dc1}
\begin{aligned}
    &\frac{|\operatorname{div }_{\Sigma}A|^{2}}{\left(\operatorname{det}A\right)^{\frac{1}{n}}}+\langle \operatorname{div}_{\Sigma}A,\nabla^{\Sigma} u\rangle\\
    &=\frac{4|\operatorname{div}_{\Sigma}A|^{2}+4\langle \operatorname{div}_{\Sigma}A, \left(\operatorname{det}A\right)^{\frac{1}{n}}\nabla^{\Sigma} u\rangle}{4\left(\operatorname{det}A\right)^{\frac{1}{n}}}\\
    &=\frac{ 4|\operatorname{div}_{\Sigma}A|^{2}+4\langle \operatorname{div}_{\Sigma}A, \left(\operatorname{det}A\right)^{\frac{1}{n}}\nabla^{\Sigma} u\rangle+|\left(\operatorname{det}A\right)^{\frac{1}{n}}\nabla^{\Sigma} u|^{2}}{4\left(\operatorname{det}A\right)^{\frac{1}{n}}}-\frac{\left(\operatorname{det}A\right)^{\frac{1}{n}}|\nabla^{\Sigma} u|^{2}}{4}\\
    &=\frac{|2\operatorname{div}_{\Sigma}A+\left(\operatorname{det}A\right)^{\frac{1}{n}}\nabla^{\Sigma} u|^{2}}{4\left(\operatorname{det}A\right)^{\frac{1}{n}}}-\frac{\left(\operatorname{det}A\right)^{\frac{1}{n}}|\nabla^{\Sigma} u|^{2}}{4},
\end{aligned}
    \end{equation}
    and 
        \begin{equation}\label{dc2}
\begin{aligned}
&\frac{|\left\langle A,\Rmnum{2}\right\rangle |^{2}}{\left(\operatorname{det}A\right)^{\frac{1}{n}}}+\langle A, \langle\Rmnum{2},\bar{y}\rangle\rangle\\
&=\frac{|\left\langle A,\Rmnum{2}\right\rangle |^{2}}{\left(\operatorname{det}A\right)^{\frac{1}{n}}}+\langle\langle A, \Rmnum{2}\rangle,\bar{y}\rangle\\
&=\frac{|2\langle A,\Rmnum{2}\rangle|^{2}+\langle 4\langle A,\Rmnum{2}\rangle,\left(\operatorname{det}A\right)^{\frac{1}{n}}\cdot \bar{y} \rangle+|\left(\operatorname{det}A\right)^{\frac{1}{n}}\cdot \bar{y}|^{2}}{4\left(\operatorname{det}A\right)^{\frac{1}{n}}}-\frac{\left(\operatorname{det}A\right)^{\frac{1}{n}} |\bar{y}|^{2}}{4}\\
&=\frac{|2\langle A,\Rmnum{2}\rangle+\left(\operatorname{det}A\right)^{\frac{1}{n}}\cdot \bar{y}|^{2}}{4\left(\operatorname{det}A\right)^{\frac{1}{n}}}-\frac{\left(\operatorname{det}A\right)^{\frac{1}{n}} |\bar{y}|^{2}}{4}.
\end{aligned}
    \end{equation}
    
    Combining \eqref{dc1} and \eqref{dc2}, we obtain
    \[
    \begin{aligned}
    	    \left\langle A(\bar{x}),\operatorname{Hess}^{\Sigma}u(\bar{x})-\langle\Rmnum{2}(\bar{x}),\bar{y}  \rangle \right\rangle&\leq\frac{\left(\operatorname{det}A\right)^{\frac{1}{n}}\left(|\nabla^{\Sigma}u(\bar{x})|^{2}+|\bar{y}|^{2}  \right)}{4}+ \left(\operatorname{det}A\right)^{\frac{1}{n}}\log \left(\operatorname{det}A\right)^{\frac{1}{n}} \\
		    &\quad\; -\frac{\left|2\langle A(\bar{x}),\Rmnum{2}(\bar{x})\rangle+\left(\operatorname{det}A\right)^{\frac{1}{n}}\cdot \bar{y}\right|^{2}}{4\left(\operatorname{det}A\right)^{\frac{1}{n}}}.
    \end{aligned}
    \]
    \end{proof}
    
    \begin{lemma}[Lemma 3.6 in \cite{B4}]\label{Brendle3}
    The Jacobian determinant of $\Phi_{r}$ satisfies
    \[
    |\operatorname{det} D \Phi_{r}(\bar{x}, \bar{y})| \leq r^m \operatorname{det}\left(g+r \operatorname{Hess}^{\Sigma}u(\bar{x})-r\langle \Rmnum{2}(\bar{x}), \bar{y}\rangle\right)
    \]
    for all $(\bar{x},\bar{y})\in A_{r}$.
    \end{lemma}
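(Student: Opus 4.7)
The plan is to compute the Jacobian of $\Phi_{r}$ at $(\bar x,\bar y)$ via Jacobi-field analysis along the geodesic $\bar\gamma(t)=\exp_{\bar x}(rt\nabla^{\Sigma}u(\bar x)+rt\bar y)$, $t\in[0,1]$, and then compare the resulting $(n+m)\times(n+m)$ determinant with its Euclidean counterpart using the nonnegativity of sectional curvature, following the ABP-style scheme developed in \cite{B3,B4}.

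First I fix orthonormal bases $\{e_{1},\dots,e_{n}\}$ of $T_{\bar x}\Sigma$ and $\{\nu_{1},\dots,\nu_{m}\}$ of $T^{\perp}_{\bar x}\Sigma$. For each $e_{i}$ I pick a curve $\sigma_{i}(s)\subset\Sigma$ with $\sigma_{i}'(0)=e_{i}$ and extend $\bar y$ to a normal vector field along $\sigma_{i}$ by parallel transport in the normal bundle; for each $\nu_{k}$ I take the trivial variation $(\bar x,\bar y+s\nu_{k})$. Plugging these variations into $\Phi_{r}$ and differentiating at $s=0$ yields $n+m$ Jacobi fields $W_{1},\dots,W_{n+m}$ along $\bar\gamma$ whose values at $t=1$ are precisely the images of the corresponding basis vectors of $T_{(\bar x,\bar y)}T^{\perp}\Sigma$ under $D\Phi_{r}(\bar x,\bar y)$. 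Invoking the Gauss and Weingarten formulas, the initial data work out to
\begin{align*}
W_{i}(0)&=e_{i},\quad \nabla_{t}W_{i}(0)=r\bigl(\operatorname{Hess}^{\Sigma}u-\langle\Rmnum{2},\bar y\rangle\bigr)(e_{i},\cdot)^{\sharp}+r\,\Rmnum{2}(e_{i},\nabla^{\Sigma}u),\\
W_{n+k}(0)&=0,\quad \nabla_{t}W_{n+k}(0)=r\nu_{k},
\end{align*}
for $1\le i\le n$ and $1\le k\le m$. In a parallel orthonormal frame along $\bar\gamma$, $|\det D\Phi_{r}(\bar x,\bar y)|$ equals the absolute value of the determinant of the matrix $W(1)=[W_{1}(1)\,\cdots\,W_{n+m}(1)]$.

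Next I compare with the flat case. The matrix $W(t)$ satisfies $W''(t)+\mathcal{R}(t)W(t)=0$, where $\mathcal{R}(t)$ is the symmetric curvature endomorphism attached to $\bar\gamma'(t)$, and nonnegative sectional curvature translates into nonnegativity of $\mathcal{R}(t)$ on 2-planes containing $\bar\gamma'(t)$. Lemma 4.4 guarantees that the tangential initial block $g+r\operatorname{Hess}^{\Sigma}u-r\langle\Rmnum{2}(\bar x),\bar y\rangle$ is positive semidefinite, and Lemma 4.5 prevents $W(t)$ from becoming singular on $(0,1]$ whenever this block is strictly positive. A matrix Riccati argument applied to $Q(t)=W'(t)W(t)^{-1}$ then yields $|\det W(1)|\le|\det W_{\mathrm{flat}}(1)|$, where $W_{\mathrm{flat}}$ is the affine linearization $W_{j}^{\mathrm{flat}}(t)=W_{j}(0)+t\nabla_{t}W_{j}(0)$ obtained by setting $\mathcal{R}\equiv 0$. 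On the flat side, the purely normal columns $W_{n+k}^{\mathrm{flat}}(1)=r\nu_{k}$ are used to eliminate the normal parts $r\Rmnum{2}(e_{i},\nabla^{\Sigma}u)$ from the tangential columns by elementary column operations; the determinant then factors as $r^{m}\det\bigl(I_{n}+r\operatorname{Hess}^{\Sigma}u-r\langle\Rmnum{2}(\bar x),\bar y\rangle\bigr)$, that is, $r^{m}\det\bigl(g+r\operatorname{Hess}^{\Sigma}u-r\langle\Rmnum{2}(\bar x),\bar y\rangle\bigr)$. Degenerate configurations are handled by a standard limit argument.

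The main obstacle is the Jacobi comparison itself: one must upgrade the scalar Rauch comparison (Jacobi fields in nonneg curvature are no longer than their Euclidean counterparts) to a determinant inequality for the full $(n+m)\times(n+m)$ matrix $W(t)$ with mixed tangential--normal initial data. This is where the three preceding lemmata enter in concert: Lemma 4.4 controls the sign of the initial configuration, Lemma 4.5 gives the nondegeneracy required to form $W'(t)W(t)^{-1}$ globally on $[0,1]$, and nonnegative sectional curvature closes out the matrix Riccati inequality that delivers the final determinant bound.
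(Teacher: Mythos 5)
The paper does not prove this statement; it cites it verbatim from Lemma 3.6 of Brendle's arXiv preprint \cite{B4}. Your outline reconstructs Brendle's Jacobi-field scheme correctly in its ingredients: the initial data you derive from the Gauss and Weingarten formulas are right (the tangential variation gives $\nabla_t W_i(0)=r\bigl(\operatorname{Hess}^{\Sigma}u-\langle\Rmnum{2},\bar y\rangle\bigr)(e_i,\cdot)^{\sharp}+r\,\Rmnum{2}(e_i,\nabla^{\Sigma}u)$, the normal variation gives $W_{n+k}(0)=0$, $\nabla_t W_{n+k}(0)=r\nu_k$), and your block-triangular evaluation of the affine linearization yields exactly $r^m\det\bigl(g+r\operatorname{Hess}^{\Sigma}u-r\langle\Rmnum{2},\bar y\rangle\bigr)$.

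The Riccati step, however, contains a concrete gap. You claim Lemma~4.5 ``gives the nondegeneracy required to form $Q(t)=W'(t)W(t)^{-1}$ globally on $[0,1]$.'' This is false at $t=0$: the normal columns satisfy $W_{n+k}(0)=0$ by construction, so $W(0)$ has rank $n<n+m$ and is never invertible; Lemma~4.5 only yields linear independence of the Jacobi fields on $(0,1)$, and $Q(t)$ blows up like $t^{-1}$ on the normal factor as $t\to 0^{+}$. Consequently the comparison cannot be seeded at $t=0$ as your proposal suggests. Brendle's actual argument integrates $\tfrac{d}{dt}\log\det W(t)=\operatorname{tr}Q(t)$ over $[\epsilon,1]$, exploits the asymptotics $\det W(t)=(rt)^{m}\bigl(1+O(t)\bigr)$ as $t\to 0^{+}$ (which hold for both the curved and flat Jacobi matrices, since they share the same zeroth- and first-order Taylor data at $t=0$), and lets $\epsilon\to 0^{+}$ so that the divergent boundary contributions $m\log(r\epsilon)$ cancel; the Riccati ODE comparison must therefore be set up as a limit of comparisons initialized at small positive times. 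This $t\to 0^{+}$ matching is specific to the positive-codimension setting (it does not arise in the paper's Lemma~\ref{detest2}, where $P(0)=I_n$ is already invertible) and it is not addressed by your ``degenerate configurations'' remark, which refers to the separate, milder issue of the initial tangential block being only positive semidefinite. As written, your proposal leaves the technical heart of the lemma unproved.
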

    
    \begin{lemma}\label{detestimate}
    For $(\bar{x},\bar{y})\in A_{r}$, the Jacobian determinant of $\Phi_{r}$ can be estimated as follows:
    \begin{equation}
    |\operatorname{det}D\Phi_{r}(\bar{x},\bar{y})|\leq r^{n+m}\left(\operatorname{det}A\right)^{\frac{1}{n}}e^{\frac{C_{2}}{C_{1}r}-n}e^{\frac{d^{2}(x,\Phi_{r}(\bar{x},\bar{y}))}{4r^{2}}}e^{-\frac{|2\xi(\bar{x})+\bar{y}|^{2}}{4}},
    \end{equation}
    where $\xi(\bar{x}):=\frac{1}{\left(\operatorname{det}A(\bar{x})\right)^{\frac{1}{n}}}\langle A(\bar{x}),\Rmnum{2}(\bar{x}) \rangle\in T_{\bar{x}}^{\perp}\Sigma$, $C_{1}$ and $C_{2}$ are positive constants depending only on $A$.
    \end{lemma}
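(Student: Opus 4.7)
The plan is to combine the Jacobian bound of Lemma~\ref{Brendle3} with a matrix AM--GM inequality weighted by $A$, then insert the pointwise estimate of Lemma~\ref{keyestimate}; the claimed exponential form follows from the elementary inequality $t\leq e^{t-1}$ and from the bound on $\operatorname{tr}A/\rho$ provided by uniform positive definiteness. Concretely, set $\rho:=(\operatorname{det}A(\bar{x}))^{1/n}$ and $M:=\operatorname{Hess}^{\Sigma}u(\bar{x})-\langle\Rmnum{2}(\bar{x}),\bar{y}\rangle$. Lemma~\ref{Brendle3} gives $|\operatorname{det}D\Phi_{r}|\leq r^{m}\operatorname{det}(g+rM)$, and $g+rM\geq 0$ by Lemma~4.4 of \cite{B3}. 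Applying scalar AM--GM to the eigenvalues of the positive semi-definite matrix $A^{1/2}(g+rM)A^{1/2}$, whose determinant equals $\rho^{n}\operatorname{det}(g+rM)$ and whose trace equals $\operatorname{tr}A+r\langle A,M\rangle$, yields
\[
\operatorname{det}(g+rM)\leq\frac{1}{\rho^{n}}\left(\frac{\operatorname{tr}A+r\langle A,M\rangle}{n}\right)^{n}.
\]

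Substituting the bound $\langle A,M\rangle\leq\rho(s+\log\rho-\sigma)$ supplied by Lemma~\ref{keyestimate}, where $s:=(|\nabla^{\Sigma}u(\bar{x})|^{2}+|\bar{y}|^{2})/4$ and $\sigma:=|2\xi(\bar{x})+\bar{y}|^{2}/4$, it suffices to prove that
\[
w^{n}\leq\rho\,e^{C_{2}/(C_{1}r)-n+s-\sigma}
\quad\text{where}\quad
w:=\frac{\operatorname{tr}A}{nr\rho}+\frac{1}{n}(s+\log\rho-\sigma).
\]
The identity $s=d^{2}(\bar{x},\Phi_{r}(\bar{x},\bar{y}))/(4r^{2})$ is forced by specializing the defining inequality of $A_{r}$ to $x=\bar{x}$, which yields $d(\bar{x},\Phi_{r}(\bar{x},\bar{y}))\geq r\sqrt{|\nabla^{\Sigma}u|^{2}+|\bar{y}|^{2}}$, together with the trivial upper bound from the length of the defining geodesic. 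Positivity $w\geq 0$ follows from $g+rM\geq 0$ and $A>0$ (which force $\operatorname{tr}A+r\langle A,M\rangle\geq 0$), and then $t\leq e^{t-1}$ gives $w^{n}\leq e^{n(w-1)}=\rho\,e^{\operatorname{tr}A/(r\rho)+s-\sigma-n}$. The uniform positive definiteness $C_{1}g\leq A\leq C_{2}g$ yields $\operatorname{tr}A\leq nC_{2}$ and $\rho\geq C_{1}$, so $\operatorname{tr}A/(r\rho)\leq nC_{2}/(C_{1}r)$; absorbing the factor $n$ into $C_{2}$ (still a constant depending only on $A$) gives the desired estimate.

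The main technical point is the choice of matrix AM--GM: one must weight by $A$ from the outset, since the naive bound $\operatorname{det}(I+rM)\leq e^{r\operatorname{tr}M}$ features $\operatorname{tr}M$, whereas Lemma~\ref{keyestimate} controls only the weighted quantity $\langle A,M\rangle$. Once the weighted AM--GM is in place, the remainder is algebraic manipulation; the only remaining subtlety is the sign check $w\geq 0$ needed to apply $t\leq e^{t-1}$, with the degenerate case $w=0$ being trivial because then $g+rM=0$ and the Jacobian vanishes.
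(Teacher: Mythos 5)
Your proof is correct and follows essentially the same route as the paper: Lemma~\ref{Brendle3}, then $A$-weighted AM--GM (which you phrase via $A^{1/2}(g+rM)A^{1/2}$ rather than via $\det(A\circ B)=\det A\,\det B$, an immaterial difference), then $t\leq e^{t-1}$, then Lemma~\ref{keyestimate} and the uniform bounds $C_1\leq\rho$, $\operatorname{tr}A\leq C_2$. The only deviations are cosmetic — you apply Lemma~\ref{keyestimate} before rather than after the exponential inequality, and you make explicit the positivity check $w\geq 0$ (which the paper leaves implicit via $g+rM\geq 0$), both of which are fine.
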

    
    \begin{proof}
    We start with Lemma \ref{Brendle3}, 
    \[
    \begin{aligned}
     |\operatorname{det} D \Phi_{r}(\bar{x}, \bar{y})| &\leq r^m \operatorname{det}\left(g+r \operatorname{Hess}^{\Sigma}u(\bar{x})-r\langle \Rmnum{2}(\bar{x}), \bar{y}\rangle\right)\\
     &=r^{n+m}\frac{1}{\operatorname{det}A(\bar{x})}\operatorname{det}\left(  A(\bar{x})\circ\left( \frac{g}{r}+\operatorname{Hess}^{\Sigma}u(\bar{x})-\langle \Rmnum{2}(\bar{x}), \bar{y}\rangle  \right) \right)\\
     &\leq r^{n+m}\frac{1}{\operatorname{det}A(\bar{x})}\left(  \frac{ \operatorname{tr}\left[A(\bar{x})\circ \left(\frac{g}{r}+\operatorname{Hess}^{\Sigma}u(\bar{x})-\langle \Rmnum{2}(\bar{x}), \bar{y}\rangle    \right)  \right]   }{n}  \right)^{n}\\
     &=r^{n+m}\frac{1}{\operatorname{det}A(\bar{x})}\left( \frac{\langle A,\frac{g}{r} \rangle}{n}+\frac{\langle A(\bar{x}), \operatorname{Hess}^{\Sigma}u(\bar{x})-\langle \Rmnum{2}(\bar{x}), \bar{y}\rangle\rangle}{n}  \right)^{n}\\
     &=r^{n+m}\left( \frac{ \frac{1}{\left(\operatorname{det}A\right)^{\frac{1}{n}}}\langle A,\frac{g}{r} \rangle+\frac{1}{\left(\operatorname{det}A\right)^{\frac{1}{n}}} \langle A, \operatorname{Hess}^{\Sigma}u(\bar{x})-\langle \Rmnum{2}(\bar{x}), \bar{y}\rangle \rangle }{n} \right)^{n},
    \end{aligned}
    \]
    where we have used the arithmetic-geometric mean inequality $\mathrm{det}AB \leq\left(\frac{\mathrm{tr}\left( AB\right)}{n} \right)^{n}$.

    By the elementary inequality $\lambda \leq e^{\lambda -1}$, we get 
    \[
    \begin{aligned}
     |\operatorname{det} D \Phi_{r}(\bar{x}, \bar{y})| &\leq r^{n+m} e^{ \frac{1}{\left(\operatorname{det}A\right)^{\frac{1}{n}}}\langle A,\frac{g}{r} \rangle+\frac{1}{\left(\operatorname{det}A\right)^{\frac{1}{n}}} \langle A, \operatorname{Hess}^{\Sigma}u(\bar{x})-\langle \Rmnum{2}(\bar{x}), \bar{y}\rangle \rangle -n}.
    \end{aligned}
    \]
    Applying Lemma \ref{keyestimate}, then we obtain 
    \begin{equation}\label{det1}
    \begin{aligned}
     |\operatorname{det} D \Phi_{r}(\bar{x}, \bar{y})| &\leq r^{n+m} e^{ \frac{\operatorname{tr}_{g}(A)}{r\cdot\left(\operatorname{det}A\right)^{\frac{1}{n}}}+\log\left(\operatorname{det}A\right)^{\frac{1}{n}}+\frac{|\nabla^{\Sigma}u|^{2}+|\bar{y}|^{2}}{4}-\frac{|2\xi(\bar{x})+\bar{y}|^{2}}{4}-n}.
    \end{aligned}
    \end{equation}
    
    By assumption, $A$ is uniformly positive definite, thus there exists a positive constant $C_{1}$ such that 
    \begin{equation}\label{det2}
    \left(\operatorname{det}A\right)^{\frac{1}{n}}\geq C_{1}>0,\quad \forall x\in \Sigma.
    \end{equation}
    On the other hand, since $\Sigma$ is compact, the smooth function $\operatorname{tr}_{g}(A)(x)$ must be bounded by some positive constants $C_{2}$, that is,
    \begin{equation}\label{det3}
    \operatorname{tr}_{g}(A)(x)\leq C_{2},\quad \forall x\in\Sigma.
    \end{equation}
    And one can easily see from the proof of Lemma \ref{containset} that 
    \begin{equation}\label{det4}
    d\left(\bar{x},\Phi_{r}(\bar{x},\bar{y})\right)^{2}=r^{2}\left(|\nabla^{\Sigma}u(\bar{x})|^{2}+|\bar{y}|^{2}\right),\quad \forall (\bar{x},\bar{y})\in A_{r}.
    \end{equation}

    Finally, combining \eqref{det1}, \eqref{det2}, \eqref{det3} and \eqref{det4}, we conclude that
        \begin{equation*}
    |\operatorname{det}D\Phi_{r}(\bar{x},\bar{y})|\leq r^{n+m}\left(\operatorname{det}A\right)^{\frac{1}{n}}e^{\frac{C_{2}}{C_{1}r}-n}e^{\frac{d^{2}(\bar{x},\Phi_{r}(\bar{x},\bar{y}))}{4r^{2}}}e^{-\frac{|2\xi(\bar{x})+\bar{y}|^{2}}{4}}.
    \end{equation*}
 This completes the proof.
    \end{proof}
    
    With the preparations above, we are now in a position to prove Theorem\ref{main1}.\\
    
    \noindent\textbf{\emph{Proof of Theorem \ref{main1}.}} By Lemma \ref{containset}, we can assign each point $p\in M$ some point $(x_{p},y_{p})\in A_{r}$ with the property $\Phi_{r}(x_{p},y_{p})=p$. Then by Lemma \ref{containset} and area formula originating from geometric measure theory, we obtain
    \begin{equation}\label{area1}
    \begin{aligned}
    \int_{M}e^{-\frac{d(x_{p},p)^{2}}{4r^{2}}}dV(p)&\leq \int_{M}\left(  \int_{(x,y)\in A_{r}|\Phi_{r}(x,y)=p}e^{-\frac{d(x,\Phi_{r}(x,y))^{2}}{4r^{2}}} d\mathcal{H}^{0}\right)dV(p),
    \end{aligned}
    \end{equation}
    where $\mathcal{H}^{0}$ is the counting measure. We then apply area formula with the map $\Phi_{r}:A_{r}\longrightarrow M$ to the right hand side of \eqref{area1} to get 
        \begin{equation}\label{area2}
    \begin{aligned}
    \int_{M}e^{-\frac{d(x_{p},p)^{2}}{4r^{2}}}dV(p)&\leq \int_{A_{r}}e^{-\frac{d(x,\Phi_{r}(x,y))^{2}}{4r^{2}}}|\operatorname{det}D\Phi_{r}(x,y)|dV(x,y)\\
    &=\int_{\Sigma}\left( \int_{T_{x}^{\perp}\Sigma} e^{-\frac{d(x,\Phi_{r}(x,y))^{2}}{4r^{2}}}|\operatorname{det}D\Phi_{r}(x,y)| \chi_{A_{r}}(x,y)dy\right)dV(x),
    \end{aligned}
    \end{equation}
    where $\chi_{A_{r}}$ is the indicator function. Using the estimate in Lemma \ref{detestimate} to the intergrand of right hand side of \eqref{area2} yields the following
            \begin{equation}\label{area3}
    \begin{aligned}
    \int_{M}e^{-\frac{d(x_{p},p)^{2}}{4r^{2}}}dV(p)&\leq \int_{\Sigma}\left(\int_{T_{x}^{\perp}\Sigma} r^{n+m}\left(\operatorname{det}A\right)^{\frac{1}{n}}e^{\frac{C_{2}}{C_{1}r}-n}e^{-\frac{|2\xi(x)+y|^{2}}{4}}\chi_{A_{r}}(x,y)dy \right)dV(x)\\
    &\leq \int_{\Sigma}\left(\int_{T_{x}^{\perp}\Sigma} r^{n+m}\left(\operatorname{det}A\right)^{\frac{1}{n}}e^{\frac{C_{2}}{C_{1}r}-n}e^{-\frac{|2\xi(x)+y|^{2}}{4}}dy \right)dV(x)\\
    &=r^{n+m}e^{\frac{C_{2}}{C_{1}r}-n}\int_{\Sigma}\left(\operatorname{det}A\right)^{\frac{1}{n}}\left( \int_{T_{x}^{\perp}\Sigma}e^{-\frac{|2\xi(x)+y|^{2}}{4}}dy \right)dV(x)\\
    &=r^{n+m}e^{\frac{C_{2}}{C_{1}r}-n}(4\pi)^{\frac{m}{2}}\int_{\Sigma}\left(\operatorname{det}A(x)\right)^{\frac{1}{n}}dV(x).
    \end{aligned}
    \end{equation}
       In the last equality, we have used the identity of Guassian function $(4\pi)^{\frac{N}{2}}=\int_{\mathbb{R}^{N}}e^{-\frac{|x|^{2}}{4}}dx$.
       
       Dividing both sides by $r^{n+m}$ in \eqref{area3}, and letting $r\to +\infty$, we then obtain
       \[
       (4\pi)^{\frac{m+n}{2}}\theta\leq e^{-n}(4\pi)^{\frac{m}{2}}\int_{\Sigma}\left(\operatorname{det}A(x)\right)^{\frac{1}{n}}dV(x).
       \]
       Following by taking logarithmic fubction, multiplying the inequality with $\left(\operatorname{det}A\right)^{\frac{1}{n}}$ and then integrating over $\Sigma$, we can get 
       \begin{equation}
       \int_{\Sigma}\left(\operatorname{det}A\right)^{\frac{1}{n}}\left(n+\frac{n}{2}\log(4\pi)+\log\theta\right)\leq \int_{\Sigma}\left(\operatorname{det}A\right)^{\frac{1}{n}}\log\left( \int_{\Sigma}\left(\operatorname{det}A\right)^{\frac{1}{n}}\right).
       \end{equation}
       This means that the above inequality is exactly \eqref{goal}. Therefore, in this special case that $\Sigma$ is connected, Theorem \ref{main1} has been proved.
    
    It remains to show the case when $\Sigma$ is disconnected. One can follow Brendle's idea \cite{B3} to apply the inequality to each individual connected component of $\Sigma$, and sum over all connected components with the help of following simple inequality
    \[
    a\log a+b\log b<a\log (a+b)+b\log (a+b)=(a+b)\log(a+b).
    \]
     This completes our proof of Theorem \ref{main1}.   \qed
    
    \section{Proof of Theorem \ref{main2}}
    
    In this section, we prove a Sobolev-type inequality for uniformly positive symmetric 2-tensor fields on smooth domains in Riemannian manifolds. Throughout this section, we assume that $(M,g)$ is a complete, noncompact Riemannian manifold of dimension $n$ with nonnegative sectional curvature. Let $D$ be a compact domain in $M$ and $A$ be a smooth symmetric uniformly positive definite $\left( 0,2\right)$-tensor field on $D$.
    
    As in \cite{B3}, it suffices to prove the assertion in the special case when $D$ is connected. Due to the scaling invariant property of \eqref{sobolev}, we can make the following normalization 
    \begin{equation}
    \int_{D}|\operatorname{div }A|+\int_{\partial D}|A(\nu)|=n\int_{D}\left(\operatorname{det}A\right)^{\frac{1}{n-1}}.
    \end{equation}
    Then we only need to prove 
        \begin{equation}\label{goal2}
    \int_{D}\left(\operatorname{det}A\right)^{\frac{1}{n-1}}\geq |B^{n}|\theta.
    \end{equation}
    
    Since $D$ is connected and $A$ is positive definite and symmetric,  there exists a solution $u:D\longrightarrow \mathbb{R}$ to the following Neumann boundary problem:
    \begin{equation}
    \begin{cases}\operatorname{div}\left(A(\nabla u)\right)(x)=n\left(\operatorname{det}A(x)\right)^{\frac{1}{n-1}}-|\operatorname{div }A(x)|, & \text { in } D\\ \left\langle A(\nabla u)(x), \nu(x)\right\rangle=|A(\nu(x))|, & \text { on } \partial D.\end{cases}
    \end{equation}
    where $\nu$ is the outward unit normal to $\partial D$. By standard elliptic regularity theory, $u$ is in the class $C^{2,\alpha}(D)$ for every
$0<\alpha<1$ (cf. Theorem 6.30 in \cite{GT}).

As in \cite{B3}, we define
\[
U:=\{x \in D \backslash \partial D:|\nabla u(x)|<1\}.
\]
For each $r>0$, we denote by $A_{r}$ the contact set, that is the set of all points $\bar{x}\in U$ with the property that
\[
r u(x)+\frac{1}{2} d\left(x, \exp _{\bar{x}}(r \nabla u(\bar{x}))\right)^2 \geq r u(\bar{x})+\frac{1}{2} r^2|\nabla u(\bar{x})|^2,\quad \forall x\in D.
\]
Moreover, for each $r>0$, we define the transport map $\Phi_{r}:D\longrightarrow M$ by
\begin{equation}
\Phi_r(x)=\exp _x(r \nabla u(x))
\end{equation}
for all $x\in D$. Note that for each $0<\alpha<1$, since $u$ is of class $C^{2,\alpha}$, $\Phi_{r}$ is of class $C^{1,\alpha}$.

\begin{lemma}\label{hessestimate}
For $x\in U$, we have $\langle A(x),\operatorname{Hess}u(x) \rangle\leq n\left(\operatorname{det}A(x)\right)^{\frac{1}{n-1}}$.
\end{lemma}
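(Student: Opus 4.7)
The plan is to use the PDE satisfied by $u$ together with the product rule for divergence and the defining property of the set $U$. Since the statement involves $\langle A, \operatorname{Hess} u\rangle$ and a bound in terms of $(\det A)^{1/(n-1)}$, and the PDE involves $\operatorname{div}(A(\nabla u))$ and both $(\det A)^{1/(n-1)}$ and $|\operatorname{div} A|$, the most natural route is to expand $\operatorname{div}(A(\nabla u))$ via the product rule and then substitute using the equation.

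More concretely, I would first write
\[
\operatorname{div}\bigl(A(\nabla u)\bigr) = \langle \operatorname{div} A,\nabla u\rangle + \langle A,\operatorname{Hess} u\rangle,
\]
which is the same identity used in the proof of Lemma \ref{keyestimate}. Solving for $\langle A,\operatorname{Hess} u\rangle$ and plugging in the interior equation $\operatorname{div}(A(\nabla u)) = n(\det A)^{1/(n-1)} - |\operatorname{div} A|$ gives
\[
\langle A(x),\operatorname{Hess} u(x)\rangle = n\bigl(\det A(x)\bigr)^{\frac{1}{n-1}} - |\operatorname{div} A(x)| - \langle \operatorname{div} A(x),\nabla u(x)\rangle.
\]

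At this stage the only thing to control is the last two terms. Applying the Cauchy--Schwarz inequality yields
\[
-\langle \operatorname{div} A(x),\nabla u(x)\rangle \le |\operatorname{div} A(x)|\,|\nabla u(x)|,
\]
so on $U$, where $|\nabla u(x)|<1$, we obtain
\[
-|\operatorname{div} A(x)| - \langle \operatorname{div} A(x),\nabla u(x)\rangle \le |\operatorname{div} A(x)|\bigl(|\nabla u(x)|-1\bigr) \le 0,
\]
and combining with the displayed equation gives exactly $\langle A(x),\operatorname{Hess} u(x)\rangle \le n(\det A(x))^{1/(n-1)}$.

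There is no real obstacle here; the content of the lemma is essentially just unpacking the PDE on $U$. The only subtlety worth flagging is that the boundary data $\langle A(\nabla u),\nu\rangle = |A(\nu)|$ plays no role in this interior estimate — it will matter only in the subsequent ABP argument, when one needs to know that optimal transport rays stay inside $D$. The use of $|\nabla u|<1$ on $U$ is the single analytic input, and it is precisely what the definition of $U$ was engineered to provide.
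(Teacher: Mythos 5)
Your proposal is correct and follows essentially the same argument as the paper: expand $\operatorname{div}(A(\nabla u))$ via the product rule, substitute the interior PDE, and control $-\langle\operatorname{div}A,\nabla u\rangle$ by Cauchy--Schwarz together with the bound $|\nabla u|<1$ on $U$. The only cosmetic difference is that you isolate $\langle A,\operatorname{Hess}u\rangle$ first and then estimate, whereas the paper states the Cauchy--Schwarz bound up front; the content is identical.
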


\begin{proof}
Using the inequality $|\nabla u(x)|<1$ and the Cauchy-Schwarz inequality, we obtain
$$
-\langle\operatorname{div }A(x), \nabla u(x)\rangle \leq|\operatorname{div }A(x)|.
$$
Moreover, $\operatorname{div}\left(A(\nabla u)\right)=n\left(\operatorname{det}A\right)^{\frac{1}{n-1}}-|\operatorname{div }A|$ by definition of $u$, together with the identity
$$\operatorname{div}(A(\nabla u))=\langle \operatorname{div}A,\nabla u\rangle+\langle A,\operatorname{Hess}u \rangle $$
imply
\[
\begin{aligned}
\langle A(x),\operatorname{Hess}u(x) \rangle &=\operatorname{div}(A(\nabla u))(x)-\langle \operatorname{div}A(x),\nabla u(x)\rangle\\
&=n\left(\operatorname{det}A(x)\right)^{\frac{1}{n-1}}-|\operatorname{div }A(x)|-\langle \operatorname{div}A(x),\nabla u\rangle\\
&\leq n\left(\operatorname{det}A(x)\right)^{\frac{1}{n-1}}.
\end{aligned}
\]
\end{proof}

\begin{lemma}\label{minimum}
 Fix a point $p\in M$ with the property that $d(x,p)<r$ for all $x\in D$. Then the function $f(x):=ru(x)+\frac{1}{2}d(x,p)^2$ cannot attain its minimum on $\partial D$. 
\end{lemma}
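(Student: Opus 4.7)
The plan is to argue by contradiction. Since $\bar D$ is compact and $f$ is continuous, the minimum of $f$ over $\bar D$ is attained; assume for contradiction that it is attained at some $\bar x\in\partial D$. Standard first-order optimality then forces the tangential gradient of $f$ along $\partial D$ to vanish at $\bar x$ and the outward normal derivative to be non-positive, so
\[
\nabla f(\bar x)=\alpha\,\nu(\bar x)\quad\text{for some scalar }\alpha\le 0.
\]

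Next I would compute $\nabla f(\bar x)$ explicitly. Setting $v:=\exp_{\bar x}^{-1}(p)$, the first-variation formula for the squared distance gives $\tfrac12\nabla\bigl(d(\cdot,p)^2\bigr)(\bar x)=-v$ whenever $\bar x$ is not a cut point of $p$, with the cut-locus case handled by a minimizing geodesic from $\bar x$ to $p$ as in the proof of Lemma \ref{containset}. Substituting, the identity above reads
\[
 r\,\nabla u(\bar x)=\alpha\,\nu(\bar x)+v,\qquad |v|=d(\bar x,p)<r,
\]
where the strict inequality is the standing hypothesis on $p$.

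The contradiction is then extracted by pairing this identity with the vector $A(\bar x)\,\nu(\bar x)$. Using the symmetry of $A$ together with the Neumann boundary condition $\langle A(\nabla u),\nu\rangle=|A(\nu)|$ that defines $u$, we find
\[
 r\,|A(\bar x)\,\nu(\bar x)|=\alpha\,\langle A(\bar x)\,\nu(\bar x),\nu(\bar x)\rangle+\langle v,\,A(\bar x)\,\nu(\bar x)\rangle.
\]
The first term on the right is non-positive because $A$ is positive definite and $\alpha\le 0$, while Cauchy--Schwarz bounds the second term by $|v|\,|A(\bar x)\,\nu(\bar x)|<r\,|A(\bar x)\,\nu(\bar x)|$. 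Dividing by the strictly positive number $|A(\bar x)\,\nu(\bar x)|$ yields $r<r$, the desired contradiction. The only mildly delicate step is justifying the formula for $\tfrac12\nabla d(\cdot,p)^2$ at a possible cut point of $p$, but the first-variation-of-energy argument used in Lemma \ref{containset} carries over unchanged and produces the same conclusion $r\nabla u(\bar x)-\bar\gamma'(0)=\alpha\,\nu(\bar x)$ with $\alpha\leq 0$, after which the rest of the argument is identical.
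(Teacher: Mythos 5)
Your proof is correct and is essentially the same argument as the paper's, just organized differently: you first extract the full first-order boundary optimality condition $\nabla f(\bar x)=\alpha\,\nu(\bar x)$ with $\alpha\le 0$ and then pair against $A(\bar x)\nu(\bar x)$, whereas the paper never asserts the tangential gradient vanishes and instead directly computes the one directional derivative $\langle\nabla f,-A(\nu)/|A(\nu)|\rangle$ (after checking that $A(\nu)$ is outward-pointing, so this is an admissible inward direction) and shows it is strictly negative. After unwinding, the two computations coincide: both reduce to $r|A(\nu)|\le \alpha\langle A\nu,\nu\rangle+\langle v,A\nu\rangle<r|A(\nu)|$ via the Neumann condition, positive definiteness of $A$, Cauchy--Schwarz, and $|v|=d(\bar x,p)<r$. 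Your handling of the cut-locus issue by falling back on the first-variation-of-energy argument is fine, though note the paper uses a slightly different trick there (sliding $p$ along the minimizing geodesic to a nearby point $q$ so that $d(\cdot,q)$ is smooth at $\bar x$); either device works.
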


\begin{proof}
We adapt the argument in Proposition 3.2 in \cite{WYZ}. Since $A$ is positive definite and symmetric on $\overline{D}$, we have  
\[
\left\langle \nabla u, \frac{A(\nu) }{|A(\nu) |}\right\rangle =\frac{1 }{|A(\nu) |}\langle  A(\nabla u),\nu\rangle=1
\]
and
\[
\left\langle \nu, \frac{A(\nu) }{|A(\nu) |}\right\rangle=\frac{1 }{|A(\nu) |}\langle  A(\nu),\nu\rangle>0.
\]
This means $A(\nu)$ is also outward-pointing.

Assume that $x_{0}\in \partial D$ is a minimum point of $f$. Take a curve $\gamma(t)$ with $\gamma(0)=x_{0}$ and $\gamma^{\prime}(0)=-\frac{A(\nu) }{|A(\nu) |}(x_{0})$. If $x_{0}$ is not in the cut locus of $p$, then by direct calculations, we have
\[
\begin{aligned}
\langle \nabla f,\gamma^{\prime}(0)\rangle&=r\langle \nabla u,\gamma^{\prime}(0) \rangle+d(x,p)\langle \nabla d,\gamma^{\prime}(0)\rangle\\
&\leq -r+d(x,p)\\
&<0.
\end{aligned}
\]
If $x_{0}$ lies in the cut locus of $p$, we take a minimal geodesic $C$ connecting $x_{0}$ and $p$ and then move the point $p$ to a new point $q$ along $C$. It is obvious that $C$ is also the minimal geodesic connecting $q$ and $x_{0}$. Now the distance function $\tilde{d}:=d(\cdot,q)$ is differentiable at $x_{0}$. We have
\[
\begin{aligned}
\langle \nabla f,\gamma^{\prime}(0)\rangle&=\left.\frac{d}{dt}\right|_{t=0}\left[\left( ru(x)+\frac{1}{2}(d(x,q)+d(q,p))^{2}  \right)\circ\gamma(t)\right]\\
&=r\langle \nabla u,\gamma^{\prime}(0) \rangle+(d(x,q)+d(q,p))\langle \nabla \tilde{d},\gamma^{\prime}(0)\rangle\\
&=r\langle \nabla u,\gamma^{\prime}(0) \rangle+d(x,p)\langle \nabla \tilde{d},\gamma^{\prime}(0)\rangle\\
&\leq -r+d(x,p)\\
&<0.
\end{aligned}
\]
Both cases will lead to a contradiction. Thus $f$ cannot attain its minimum on $\partial D$. 
\end{proof}

\begin{lemma}\label{caot}
The set
$$
\{p \in M: d(x, p)<r \text { for all } x \in D\}
$$
is contained in the set $\Phi_{r}(A_{r})$.
\end{lemma}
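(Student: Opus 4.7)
The plan is to adapt the variational argument from the proof of Lemma \ref{containset} to the present domain setting. Fix $p\in M$ with $d(x,p)<r$ for every $x\in D$, and consider
\[
f(x):=ru(x)+\tfrac{1}{2}d(x,p)^{2},\qquad x\in D.
\]
Since $D$ is compact, $f$ attains its minimum at some $\bar x\in D$. The crucial first step is to invoke Lemma \ref{minimum}, which, thanks to the outward-pointing nature of $A(\nu)$ on $\partial D$ furnished by the Neumann condition, forces this minimum to lie in the interior $D\setminus\partial D$.

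To avoid any differentiability issue at the cut locus of $p$, I would pick a minimizing geodesic $\bar\gamma:[0,1]\to M$ with $\bar\gamma(0)=\bar x$ and $\bar\gamma(1)=p$, and compare $f$ against the energy functional
\[
\mathcal{E}(\gamma):=ru(\gamma(0))+\tfrac{1}{2}\int_{0}^{1}|\gamma'(t)|^{2}\,dt
\]
taken over smooth paths $\gamma:[0,1]\to M$ with $\gamma(0)\in D$ and $\gamma(1)=p$. By Cauchy--Schwarz, $\mathcal{E}(\gamma)\ge f(\gamma(0))\ge f(\bar x)=\mathcal{E}(\bar\gamma)$, so $\bar\gamma$ minimizes $\mathcal{E}$. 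Because $\bar x$ is an interior point of $D$, the first variation formula with free endpoint at $t=0$ and fixed endpoint at $t=1$ yields $r\nabla u(\bar x)=\bar\gamma'(0)$. Consequently $\Phi_{r}(\bar x)=\exp_{\bar x}(\bar\gamma'(0))=p$ and
\[
r|\nabla u(\bar x)|=|\bar\gamma'(0)|=d(\bar x,p)<r,
\]
so $\bar x\in U$.

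It remains to verify the contact inequality defining $A_{r}$. Since $p=\exp_{\bar x}(r\nabla u(\bar x))$ and $\tfrac{1}{2}d(\bar x,p)^{2}=\tfrac{1}{2}r^{2}|\nabla u(\bar x)|^{2}$, the minimality $f(x)\ge f(\bar x)$ for every $x\in D$ rewrites exactly as
\[
ru(x)+\tfrac{1}{2}d\bigl(x,\exp_{\bar x}(r\nabla u(\bar x))\bigr)^{2}\ge ru(\bar x)+\tfrac{1}{2}r^{2}|\nabla u(\bar x)|^{2},
\]
which is the defining property of $A_{r}$. Hence $\bar x\in A_{r}$ with $\Phi_{r}(\bar x)=p$, establishing the desired inclusion.

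The main technical obstacle I anticipate is the possibility that $\bar x$ lies in the cut locus of $p$, where $d(\cdot,p)^{2}$ fails to be smooth and a direct derivative computation would be invalid; routing the first-order condition through the energy functional $\mathcal{E}$ circumvents this entirely, since geodesic energies and the exponential map are smooth objects. The other delicate point---excluding a minimum on $\partial D$---has already been absorbed into Lemma \ref{minimum} via the Neumann boundary condition, so the remaining verification is essentially bookkeeping.
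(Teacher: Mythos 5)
Your proof is correct and follows essentially the same route as the paper: invoke Lemma \ref{minimum} to push the minimum of $ru(x)+\tfrac12 d(x,p)^2$ into the interior, route the first-order condition through the energy functional to sidestep cut-locus issues, and then read off $\bar{x}\in U$ and $\bar{x}\in A_r$ from $d(\bar{x},p)<r$ and the minimality inequality. The additional remark spelling out the Cauchy--Schwarz comparison $\mathcal{E}(\gamma)\ge f(\gamma(0))$ is a helpful elaboration but does not change the argument.
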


\begin{proof}
Take any $p\in M$ with the property that $d(x,p)<r$ for all $x\in D$. By Lemma \ref{minimum}, there is $\bar{x}\in D\setminus \partial D$ where the function $ru(x)+\frac{1}{2}d(x,p)^2$ attains its minimum. Let $\bar{\gamma}$ be a minimizing geodesic on $M$ with $\bar{\gamma}(0)=\bar{x}$ and $\bar{\gamma}(1)=p$. Then one can check that the geodesic $\bar{\gamma}$ also minimizes the functional 
    $$ru(\gamma(0)+\frac{1}{2}\int_{0}^{1}|\gamma^{\prime}(t)|^{2}dt$$ among all smooth paths $\gamma$ with $\gamma(0)\in D$ and $\gamma(1)=p$. By the first variation formula of energy, we get 
    \[
    r\nabla u(\bar{x})=\bar{\gamma}^{\prime}(0).
    \]
    From this, we deduce that $\Phi_{r}(\bar{x})=p$. Moreover,
    \[
    r|\nabla u(\bar{x})|=|\bar{\gamma}^{\prime}(0)|=d(\bar{x},p).
    \]
    By assumption, $d(\bar{x},p)<r$. This implies $|\nabla u(\bar{x})|<1$ and hence $\bar{x}\in U$. At last, 
    \[
    \begin{aligned}
r u(x)+\frac{1}{2} d\left(x, \exp _{\bar{x}}(r \nabla u(\bar{x}))\right)^2 & =r u(x)+\frac{1}{2} d(x, p)^2 \\
& \geq r u(\bar{x})+\frac{1}{2} d(\bar{x}, p)^2 \\
& =r u(\bar{x})+\frac{1}{2} r^2|\nabla u(\bar{x})|^2.
\end{aligned}
    \]
    This means $\bar{x}\in A_{r}$.
\end{proof}

\begin{lemma}[Lemma 2.3 in \cite{B3}]\label{variation2}
Suppose that $\bar{x} \in A$, let $\bar{\gamma}(t):=\exp _{\bar{x}}(r t \nabla u(\bar{x}))$ for $t \in$ $[0,1]$. If $Z$ is a vector field along $\bar{\gamma}$ satisfying $Z(1)=0$, then
$$
r\operatorname{Hess} u(Z(0), Z(0))+\int_0^1\left(\left|\nabla_t Z(t)\right|^2-R\left(\bar{\gamma}^{\prime}(t), Z(t), \bar{\gamma}^{\prime}(t), Z(t)\right)\right) d t \geq 0.
$$
\end{lemma}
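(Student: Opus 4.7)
The approach is a free-endpoint second-variation argument. Consider the functional
\[
G(\gamma) := r\,u(\gamma(0)) + \tfrac{1}{2}\int_{0}^{1}|\gamma'(t)|^{2}\,dt
\]
defined on smooth paths $\gamma:[0,1]\to M$ with $\gamma(0)\in D$ and $\gamma(1)=p$, where $p:=\Phi_{r}(\bar{x})$. The plan is to show that $\bar\gamma$ is a minimizer of $G$, realize $Z$ as the variation field of a suitable one-parameter family $\{\gamma_{s}\}$, and extract the inequality from $F''(0)\ge 0$, where $F(s):=G(\gamma_{s})$.

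First I would verify the minimality. Plugging $x=\bar{x}$ into the contact set inequality gives $d(\bar{x},p)\ge r|\nabla u(\bar{x})|$; combined with the elementary upper bound $d(\bar x,p)\le r|\nabla u(\bar x)|$ coming from the length of $\bar\gamma$, equality is forced and $\bar\gamma$ is a minimizing geodesic from $\bar x$ to $p$. Then for any admissible $\gamma$,
\[
G(\gamma) \;\ge\; r\,u(\gamma(0)) + \tfrac12 d(\gamma(0),p)^{2} \;\ge\; r\,u(\bar x) + \tfrac12 r^{2}|\nabla u(\bar x)|^{2} \;=\; G(\bar\gamma),
\]
using length $\ge$ distance and the contact set hypothesis. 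Since $\bar x\in U$ lies in the interior of $D$, basepoint variations in every direction of $T_{\bar x}M$ are admissible for small parameter.

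Next I would construct the variation: pick a curve $\alpha(s)$ in $M$ with $\alpha(0)=\bar x$ and $\alpha'(0)=Z(0)$, and interpolate between $\alpha(s)$ and $p$ to build a smooth family $\{\gamma_{s}\}_{|s|<\varepsilon}$ with $\gamma_{0}=\bar\gamma$, $\gamma_{s}(1)=p$ for all $s$, and $\partial_{s}\gamma_{s}|_{s=0}=Z$ along $\bar\gamma$. The minimality then gives $F(s)\ge F(0)$ for small $|s|$, so $F'(0)=0$ and $F''(0)\ge 0$.

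Finally I would compute $F''(0)$. The standard second-variation formula for energy along the geodesic $\bar\gamma$ with $Z(1)=0$ produces
\[
\tfrac{d^{2}}{ds^{2}}\Big|_{s=0}\tfrac{1}{2}\!\int_{0}^{1}\!|\gamma_{s}'|^{2}\,dt \;=\; \int_{0}^{1}\!\bigl(|\nabla_{t}Z|^{2}-R(\bar\gamma',Z,\bar\gamma',Z)\bigr)\,dt \;-\; \langle \bar\gamma'(0), \alpha''(0)\rangle,
\]
where $\alpha''(0):=\nabla_{s}\alpha'(s)|_{s=0}$ encodes the acceleration of the basepoint curve. Differentiating $r\,u(\alpha(s))$ twice at $s=0$ gives $r\,\operatorname{Hess}u(Z(0),Z(0))+r\langle \nabla u(\bar x),\alpha''(0)\rangle$. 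Since $\bar\gamma'(0)=r\nabla u(\bar x)$, the two $\alpha''(0)$ contributions cancel and $F''(0)\ge 0$ becomes precisely the inequality in the lemma. The main subtlety is the bookkeeping of the boundary-acceleration terms in this free-endpoint second variation; once both are written down, their cancellation is forced by $\bar\gamma'(0)=r\nabla u(\bar x)$ and the argument closes.
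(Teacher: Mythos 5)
Your proposal is correct and it is exactly the argument Brendle gives for Lemma 2.3 in \cite{B3} (the present paper simply cites that result): one observes that the contact-set inequality says precisely that $\bar\gamma$ minimizes the free-endpoint energy functional $G$, and then the stated inequality is the nonnegativity of the second variation at the minimizer, with the basepoint-acceleration term from $\tfrac{d^2}{ds^2}\,ru(\alpha(s))$ cancelling the boundary term from the second variation of energy because $\bar\gamma'(0)=r\nabla u(\bar x)$.
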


\begin{lemma}\label{wuguan}
  	  (Lemma 2.4 in \cite{B3})	
    Suppose that $ \bar{x}\in A_{r}$, and
  	let $\bar{\gamma}\left( t\right):=\mathrm{exp}_{\bar{x}}
  	\left( rt\nabla u\left( \bar{x}\right)\right)$ 
  	for $t\in \left[ 0, 1\right]$.  
  	Moreover, let $\left\lbrace e_{1}, \ldots, e_{n}\right\rbrace$ be an orthonormal basis of $T_{\bar{x}}M$.
  	Suppose that $W$ is a Jacobi field along $\bar{\gamma}$ satisfying 
  	$\left\langle \nabla_{t}W\left( 0\right), e_{j}\right\rangle=
  	r\left( \operatorname{Hess}u\right) \left(W\left( 0\right), e_{j}\right)$
  	for each $1\leq j\leq n$.
  	If $W\left( \tau\right)=0$ for some $0<\tau<1$, 
  	then $W$ vanishes identically.
     \end{lemma}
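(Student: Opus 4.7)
The plan is to apply Lemma \ref{variation2} to a cut-and-paste test field along $\bar\gamma$, observe that the given Jacobi field saturates the resulting bound, and then extract the conclusion from a first-variation argument at the equality.

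The first step is an integration-by-parts identity. Using the Jacobi equation $\nabla_t^2 W + R(W,\bar\gamma')\bar\gamma' = 0$ on $[0,\tau]$ together with $W(\tau) = 0$,
\[
\int_0^\tau \bigl(|\nabla_t W|^2 - R(\bar\gamma', W, \bar\gamma', W)\bigr)\,dt = -\langle \nabla_t W(0), W(0)\rangle.
\]
Expanding $W(0) = \sum_j \langle W(0), e_j\rangle e_j$ in the orthonormal basis $\{e_j\}$ and summing the hypothesis $\langle \nabla_t W(0), e_j\rangle = r\operatorname{Hess}u(W(0), e_j)$ against the corresponding coefficients, one obtains $\langle \nabla_t W(0), W(0)\rangle = r\operatorname{Hess}u(W(0), W(0))$.

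Next I would define the piecewise-smooth vector field $Z$ along $\bar\gamma$ by $Z(t) = W(t)$ on $[0,\tau]$ and $Z(t) = 0$ on $[\tau,1]$. Then $Z$ is continuous with $Z(1) = 0$, so Lemma \ref{variation2} applies to $Z$ (its proof extends immediately to continuous piecewise-smooth fields, as the integrand is defined almost everywhere). Plugging the identity above into the resulting inequality collapses it to
\[
0 \leq r\operatorname{Hess}u(W(0), W(0)) - \langle \nabla_t W(0), W(0)\rangle = 0,
\]
so equality holds for this particular $Z$.

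The remaining, and most delicate, step is rigidity. For any smooth $\xi$ along $\bar\gamma$ with $\xi(0) = \xi(1) = 0$, the perturbation $Z + s\xi$ is admissible for Lemma \ref{variation2}, and the right-hand side of that lemma is a nonnegative quadratic polynomial in $s$ vanishing at $s=0$. Its linear coefficient must therefore vanish, giving
\[
\int_0^1 \bigl(\langle \nabla_t Z, \nabla_t \xi\rangle - R(\bar\gamma', Z, \bar\gamma', \xi)\bigr)\,dt = 0.
\]
Integrating by parts on $[0,\tau]$, using the Jacobi equation for $W$ and the vanishing of $Z$ on $[\tau,1]$, the left-hand side collapses to $\langle \nabla_t W(\tau), \xi(\tau)\rangle$. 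Since $\xi(\tau) \in T_{\bar\gamma(\tau)}M$ can be prescribed freely, $\nabla_t W(\tau) = 0$, and together with $W(\tau) = 0$ the uniqueness theorem for the linear Jacobi ODE forces $W \equiv 0$ on $[0,1]$.
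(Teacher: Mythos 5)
Your proof is correct and follows essentially the same strategy Brendle uses in Lemma 2.4 of \cite{B3} (which the paper simply cites without reproving): build the broken field $Z$ by extending $W$ by zero past $\tau$, show via the Jacobi equation and the initial condition that the index form of $Z$ collapses to zero, and then exploit the equality case to force $\nabla_t W(\tau)=0$, after which uniqueness of the Jacobi ODE finishes. The only packaging difference is that you extract $\nabla_t W(\tau)=0$ by differentiating the nonnegative quadratic $s\mapsto F(Z+s\xi)$ at $s=0$, whereas Brendle argues by contradiction, assuming $\nabla_t W(\tau)\neq0$ and directly constructing a competitor $Z-\varepsilon\tilde Z$ with strictly negative index form; these are equivalent formulations of the same rigidity step. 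Your observation that Lemma \ref{variation2} applies to continuous piecewise-smooth fields is the standard (and necessary) remark. One cosmetic slip: where you say ``the right-hand side of that lemma'' you mean the nonnegative quantity on the left of the inequality in Lemma \ref{variation2}.
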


\begin{lemma}\label{positived}
For $\bar{x}\in A_{r}$, we have $g+r\operatorname{Hess} u(\bar{x})\geq 0$.
\end{lemma}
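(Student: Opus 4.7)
The plan is to apply Lemma \ref{variation2} to a judiciously chosen test vector field that collapses the integral inequality into the desired pointwise statement $g + r\operatorname{Hess} u(\bar{x}) \geq 0$. Fix an arbitrary $v \in T_{\bar{x}}M$; the goal is then to establish $|v|^2 + r\operatorname{Hess} u(\bar{x})(v,v) \geq 0$.

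First I would construct the test field. Let $V(t)$ denote the parallel transport of $v$ along the geodesic $\bar{\gamma}(t) = \exp_{\bar{x}}(rt\nabla u(\bar{x}))$, and define
\[
Z(t) := (1-t)\, V(t).
\]
By construction, $Z(0) = v$ and $Z(1) = 0$, so $Z$ is an admissible test field for Lemma \ref{variation2}. Since $V$ is parallel, $\nabla_t V \equiv 0$, and hence $\nabla_t Z(t) = -V(t)$, which gives $|\nabla_t Z(t)|^2 \equiv |v|^2$ because parallel transport preserves length.

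Plugging this choice into the conclusion of Lemma \ref{variation2}, the $|\nabla_t Z|^2$ term integrates to $|v|^2$ and I obtain
\[
r\operatorname{Hess} u(\bar{x})(v,v) + |v|^2 \;\geq\; \int_0^1 R\bigl(\bar{\gamma}'(t), Z(t), \bar{\gamma}'(t), Z(t)\bigr)\, dt.
\]
The hypothesis that $M$ has nonnegative sectional curvature makes the integrand on the right nonnegative, so the right-hand side is $\geq 0$. This is precisely $|v|^2 + r\operatorname{Hess} u(\bar{x})(v,v) \geq 0$, and since $v \in T_{\bar{x}}M$ was arbitrary, the lemma follows.

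I do not expect any serious obstacle here: the entire argument is a direct specialization of the general Jacobi-field inequality of Lemma \ref{variation2}, exploiting the curvature sign in the standard way from Brendle's ABP framework. The only minor subtlety is picking a $Z$ whose derivative has constant norm and whose boundary values match what the lemma requires, and the parallel-transport ansatz $Z(t) = (1-t)V(t)$ arranges both automatically.
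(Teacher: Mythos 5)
Your argument matches the paper's proof exactly: both apply Lemma \ref{variation2} to the test field $Z(t)=(1-t)V(t)$ with $V$ the parallel transport of an arbitrary $v\in T_{\bar{x}}M$, note that $\int_0^1|\nabla_t Z|^2\,dt=|v|^2$, and discard the curvature integral using nonnegative sectional curvature. Nothing to add.
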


\begin{proof}
We define $\bar{\gamma}(t):=\exp _{\bar{x}}(r t \nabla u(\bar{x}))$. Fix an arbitrary vector $v\in T_{\bar{x}}M$, and let $V(t)$ denote the parallel transport of $v$ along $\bar{\gamma}$. Applying Lemma \ref{variation2} to the vector field $Z(t):=(1-t)V(t)$ yields
\[
g(v,v)+r\operatorname{Hess} u(v,v)-\int_0^1(1-t)^2 R\left(\bar{\gamma}^{\prime}(t), V(t), \bar{\gamma}^{\prime}(t), V(t)\right) d t \geq 0.
\]
By assumption, $M$ has nonnegative sectional curvature, we obtain $g+r\operatorname{Hess} u(\bar{x})\geq 0$.
\end{proof}

\begin{lemma}\label{detest2}
The Jacobian determinant of $\Phi_r$ is given by
$$
\left|\operatorname{det} D \Phi_r(\bar{x})\right| \leq \operatorname{det} \left(  g+r\operatorname{Hess} u(\bar{x}) \right)
$$
for all $\bar{x} \in A_r$.
\end{lemma}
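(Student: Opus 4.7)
The plan is to express $D\Phi_r(\bar x)$ through Jacobi fields along $\bar\gamma(t):=\exp_{\bar x}(rt\nabla u(\bar x))$ and then bound the resulting determinant by a matrix Riccati comparison with the Euclidean model, in the spirit of the proof of Lemma \ref{Brendle3} specialized to the case without a normal bundle. First, fix an orthonormal basis $\{e_1,\dots,e_n\}$ of $T_{\bar x}M$ and let $\{E_1(t),\dots,E_n(t)\}$ denote its parallel transport along $\bar\gamma$. For each $j$, let $W_j$ be the Jacobi field along $\bar\gamma$ with $W_j(0)=e_j$ and $\langle\nabla_tW_j(0),e_k\rangle=r\operatorname{Hess}u(e_j,e_k)$ for every $k$. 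A standard first-variation computation applied to $\Phi_r$ along the curve $s\mapsto\exp_{\bar x}(se_j)$ identifies $D\Phi_r(\bar x)e_j=W_j(1)$, so that $|\det D\Phi_r(\bar x)|=|\det P(1)|$ where $P(t)$ is the $n\times n$ matrix with entries $P_{jk}(t):=\langle W_j(t),E_k(t)\rangle$.

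Next, I would study the matrix ODE for $P$. The Jacobi equation gives $P''(t)+R(t)P(t)=0$ with $P(0)=I$ and $P'(0)=rH$, where $H_{jk}=\operatorname{Hess}u(e_j,e_k)$ and $R(t)_{jk}=\langle R(\bar\gamma'(t),E_j(t))\bar\gamma'(t),E_k(t)\rangle$. Both $H$ and $R(t)$ are symmetric, and $R(t)\ge 0$ because $M$ has nonnegative sectional curvature; symmetry propagates, so $P(t)$ stays symmetric. By Lemma \ref{positived}, $I+rH\ge 0$. Assume first the strict inequality $I+rH>0$; then Lemma \ref{wuguan} prevents any interior zero of $\det P$ on $(0,1]$ (otherwise some Jacobi field would vanish at an interior time and hence identically, contradicting $W_j(0)=e_j\neq 0$). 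On this interval $Q(t):=P'(t)P(t)^{-1}$ is well-defined, symmetric, and solves the Riccati equation $Q'+Q^2+R=0$ with $Q(0)=rH$. The corresponding Euclidean solution $Q_0(t):=rH(I+trH)^{-1}$ satisfies $Q_0'+Q_0^2=0$ with the same initial value. A matrix Riccati comparison (explained below) gives $Q(t)\le Q_0(t)$ on $[0,1]$; taking traces and using $\operatorname{tr}Q(t)=\tfrac{d}{dt}\log\det P(t)$, integration from $0$ to $1$ yields
\[
\log\det P(1)\le\log\det(I+rH),
\]
which is exactly $|\det D\Phi_r(\bar x)|\le\det(g+r\operatorname{Hess}u(\bar x))$. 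The degenerate case $I+rH\ge 0$ (not strictly positive) is recovered by perturbing $H$ to $H+\varepsilon I$ and letting $\varepsilon\downarrow 0$.

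The main obstacle is the matrix Riccati comparison itself. Set $D(t):=Q_0(t)-Q(t)$ and subtract the two Riccati equations; using $Q_0^2-Q^2=\tfrac12\{Q_0+Q,\,D\}$ with $\{\cdot,\cdot\}$ the anti-commutator, one obtains the symmetric linear ODE $D'+\tfrac12\{Q_0+Q,D\}=R$ with $D(0)=0$ and $R\ge 0$. A first-contact argument then shows $D(t)\ge 0$ throughout $[0,1]$: at any would-be first time $t_*\in(0,1]$ where an eigenvalue of $D$ touches zero with unit eigenvector $v$, one computes $\langle D'(t_*)v,v\rangle=\langle R(t_*)v,v\rangle\ge 0$, so no eigenvalue can cross from above, forcing $D\ge 0$. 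Everything else in the argument is routine bookkeeping and parallels the normal-bundle version of the estimate already used in the previous section.
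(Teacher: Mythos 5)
Your proof is correct and takes essentially the same approach as the paper: both parametrize $D\Phi_r(\bar x)$ by Jacobi fields, pass to the matrix Riccati inequality for $Q$, compare with the curvature-free solution using $R(t)\ge 0$, and recover the determinant bound by tracing $\operatorname{tr}Q=\tfrac{d}{dt}\log\det P$ and integrating. The only organizational difference is that the paper diagonalizes $\operatorname{Hess}u(\bar x)$ first, reducing the comparison to a (nearly) scalar one, while you carry out the matrix Riccati comparison directly via a first-contact argument and use $Q=P'P^{-1}$ rather than $P^{-1}P'$; one small bookkeeping slip worth fixing is that with $P_{jk}=\langle W_j,E_k\rangle$ the Jacobi equation reads $P''+PR=0$ rather than $P''+RP=0$, so you should either transpose the index convention on $P$ or take $Q=P^{-1}P'$ as the paper does (this has no effect on the final determinant bound since only $\det P(1)$ is used).
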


\begin{proof}
For any $r>0$ and $\bar{x}\in A_{r}$, let us choose an orthonormal basis $\{e_{1},\ldots,e_{n}\}$ of $T_{\bar{x}}M$ such that 
\[
\operatorname{Hess} u(e_{i},e_{j})
\]
is diagonal. Let $\{x_{1},\ldots,x_{n}\}$ be a system of geodesic normal coordinates around $\bar{x}$ such that $\frac{\partial}{\partial x_{i}}=e_{i}$. We denote by $E_{i}(t)$ the parallel transport of $e_{i}$ along $\bar{\gamma}$ for each $1\leq i\leq n$.

Define a geodesic $\bar{\gamma}(t):=\exp _{\bar{x}}(r t \nabla u(\bar{x}))$ for $t\in [0,1]$. For each $1\leq i\leq n$, we consider the unique Jacobi field $X_{i}(t)$ along $\bar{\gamma}$ with initial conditions
\[
X_{i}(0)=e_{i},\quad \left\langle \nabla_t X_i(0), e_j\right\rangle=r\operatorname{Hess} u(e_{i},e_{j}).
\]
Lemma \ref{wuguan} tells us that $\{X_{1}(t),\ldots, X_{n}(t)\}$ are linearly
independent for each $t\in (0,1)$.

Then we define two $n\times n$-matrix $P(t)=(P_{ij}(t))$ and $S(t)=(S_{ij}(t))$ by 
\[
P_{i j}(t):=\left\langle X_i(t), E_j(t)\right\rangle,\quad S_{i j}(t):=R\left(\bar{\gamma}^{\prime}(t), E_i(t), \bar{\gamma}^{\prime}(t), E_j(t)\right)
\]
for $1\leq i,j\leq n$. By the definition of Jacobi fields, we have Jacobi equation
\[
P^{\prime \prime}(t)=-P(t) S(t),\quad \forall t\in (0,1),
\]
 with initial values
 \[
 P_{i j}(0)=\delta_{i j},\quad P_{i j}^{\prime}(0)=r\operatorname{Hess} u(e_{i},e_{j}).
 \]
Since $\{X_{1}(t),\ldots, X_{n}(t)\}$ are linearly independent, $P(t)$ is invertible for each $t\in (0,1)$. We then define $Q(t):=P^{-1}(t)P^{\prime}(t)$, which satisfies the Riccati equation 
\[
Q^{\prime}(t)=-S(t)-Q^2(t),\quad \forall t\in (0,1),
\]
 with initial values
 \[
 Q_{i j}(0)=r\operatorname{Hess} u(e_{i},e_{j}).
 \]
 Moreover, the asymptotic expansion of $Q(t)$ satisfies
 \begin{equation}\label{asy}
 Q_{i j}(t)=r\operatorname{Hess} u(e_{i},e_{j})+O(t),\quad \text{as}\; t\to 0.
 \end{equation}
By assumption, $r\operatorname{Hess} u(e_{i},e_{j})$ is diagonal, thus we can write
\[
r\operatorname{Hess} u(e_{i},e_{j})=r\lambda_{i}\delta_{ij}.
\]

Since $M$ has nonnegative sectional curvature, we know that $S(t)\geq 0$ and hence 
\begin{equation}
Q^{\prime}(t)\leq -Q^{2}(t)
\end{equation}
for all $t\in (0,1)$. From \eqref{asy}, we know $Q_{i j}(s)=r\lambda_{i}\delta_{ij}+O(s)$ as $s\to 0$. Therefore we can find a small number $t_{0}>0$ such that 
\[
Q_{i j}(s)<r(\lambda_{i}+\sqrt{s})\delta_{ij}
\]
for $0<s<t_{0}$. It follows from Lemma \ref{positived} that $1+r\lambda_{i}\geq 0$. In particular, $1+rt\lambda_{i}> 0$ for each $1\leq i\leq n$ and all $t\in (0,1)$. By standard ODE comparison principle, we obtain
\[
Q_{ij}(t)\leq \frac{r(\lambda_{i}+\sqrt{s})\delta_{ij}}{1+r(t-s)(\lambda_{i}+\sqrt{s})}
\]
for all $s\in (0,t_{0})$ and all $t\in [s,1)$. Letting $s\to 0$, we get 
\begin{equation}
Q_{ij}(t)\leq \frac{r\lambda_{i}\delta_{ij}}{1+rt\lambda_{i}}
\end{equation}
for each $t\in (0,1)$. By taking trace, we obtain
\begin{equation}
\operatorname{tr}Q(t)\leq \sum_{i=1}^{n}\frac{r\lambda_{i}}{1+rt\lambda_{i}}.
\end{equation}

Finally, we can estimate $\left|\operatorname{det} D \Phi_r(\bar{x})\right|$. Observe that 
\[
\frac{\partial \Phi_{r}}{\partial x_{i}}(\bar{x})=X_{i}(1),\quad \forall 1\leq i\leq n.
\]
Hence $\left|\operatorname{det} D \Phi_r(\bar{x})\right|=|\operatorname{det} P(1)|$. Note that $\operatorname{det} P(t)>0$ when $t$
is sufficiently small. On the other hand, since $P(t)$ is invertible for all $t\in (0,1)$, it implies $\operatorname{det} P(t)>0$ for all $t\in (0,1)$.

 From $Q(t)=P^{-1}(t)P^{\prime}(t)$, we get 
\[
\frac{d}{dt}\log\operatorname{det}P(t)=\operatorname{tr}Q(t)\leq \sum_{i=1}^{n}\frac{r\lambda_{i}}{1+rt\lambda_{i}}.
\]
Integrating the above inequality over $[\varepsilon,t]$ for $0<\varepsilon<t$ and letting $\varepsilon\to 0^{+}$, we obtain
\[
\operatorname{det}P(t)\leq \prod_{i=1}^{n}(1+r\lambda_{i}t),
\]
where we have used $P(0)=I_{n}$. Sending $t\to 1$, we get 
\[
\left|\operatorname{det} D \Phi_r(\bar{x})\right|=\operatorname{det} P(1)\leq \prod_{i=1}^{n}(1+r\lambda_{i})=\operatorname{det} \left(  g+r\operatorname{Hess} u(\bar{x}) \right).
\]
This completes the proof.
\end{proof}

\begin{lemma}\label{refined}
The Jacobian determinant of $\Phi_r$ is satisfies
$$
\left|\operatorname{det} D \Phi_r(\bar{x} )\right| \leq \frac{1}{\operatorname{det}(\bar{x} )}\left( \frac{\operatorname{tr}_{g}A(\bar{x} )}{n}+r\left(\operatorname{det}A(\bar{x} )\right)^{\frac{1}{n-1}} \right)^{n}
$$
for all $\bar{x} \in A_r$.
\end{lemma}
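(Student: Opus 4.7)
The plan is to combine the Hessian-based Jacobian bound of Lemma \ref{detest2} with the arithmetic-geometric mean (AM-GM) inequality and the pointwise estimate from Lemma \ref{hessestimate}, mirroring exactly the template used to prove Lemma \ref{detestimate} in the Log-Sobolev setting. Concretely, starting from $|\det D\Phi_{r}(\bar{x})|\leq \det(g+r\operatorname{Hess}u(\bar{x}))$ supplied by Lemma \ref{detest2}, I will factor out $A$ to obtain
$$\det\bigl(g+r\operatorname{Hess}u(\bar{x})\bigr)=\frac{1}{\det A(\bar{x})}\det\Bigl(A(\bar{x})\circ\bigl(g+r\operatorname{Hess}u(\bar{x})\bigr)\Bigr),$$
using the multiplicativity of determinants for compositions of $(1,1)$-tensors.

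Next I will apply the AM-GM inequality $\det B\leq (\operatorname{tr}B/n)^{n}$ to the $(1,1)$-tensor $B:=A(\bar{x})\circ(g+r\operatorname{Hess}u(\bar{x}))$. This is legitimate because $A$ is positive definite and, by Lemma \ref{positived}, $g+r\operatorname{Hess}u(\bar{x})\geq 0$, so $B$ is similar to the positive semidefinite symmetric tensor $A^{1/2}\circ(g+r\operatorname{Hess}u)\circ A^{1/2}$ and thus has nonnegative real eigenvalues. Using the identity $\operatorname{tr}(A\circ S)=\langle A,S\rangle$ for symmetric $S$, the trace splits as $\langle A,g\rangle+r\langle A,\operatorname{Hess}u\rangle=\operatorname{tr}_{g}A+r\langle A,\operatorname{Hess}u\rangle$. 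Since $\bar{x}\in A_{r}\subset U$, Lemma \ref{hessestimate} gives $\langle A(\bar{x}),\operatorname{Hess}u(\bar{x})\rangle\leq n(\det A(\bar{x}))^{1/(n-1)}$, and substituting yields
$$|\det D\Phi_{r}(\bar{x})|\leq \frac{1}{\det A(\bar{x})}\left(\frac{\operatorname{tr}_{g}A(\bar{x})+rn(\det A(\bar{x}))^{\frac{1}{n-1}}}{n}\right)^{n}=\frac{1}{\det A(\bar{x})}\left(\frac{\operatorname{tr}_{g}A(\bar{x})}{n}+r\bigl(\det A(\bar{x})\bigr)^{\frac{1}{n-1}}\right)^{n},$$
which is the desired inequality.

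I do not expect a serious obstacle here: once Lemma \ref{detest2} is available, the argument is the same algebraic manipulation already carried out in Lemma \ref{detestimate}, with the only substantive change being the replacement of the Log-Sobolev trace estimate by the Sobolev-type estimate from Lemma \ref{hessestimate}. The only subtleties to verify are the symmetry/positivity that legitimizes AM-GM on the tensor composition $A\circ(g+r\operatorname{Hess}u)$ and the containment $A_{r}\subset U$ that licenses the use of Lemma \ref{hessestimate} at contact points.
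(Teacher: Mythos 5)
Your proof is correct and follows exactly the same route as the paper: start from Lemma \ref{detest2}, factor out $\det A$, apply the arithmetic-geometric mean inequality $\det B\leq(\operatorname{tr}B/n)^{n}$, split the trace, and invoke Lemma \ref{hessestimate}. The only difference is that you spell out the positivity and containment ($A_{r}\subset U$) justifications more explicitly than the paper does, which is a modest improvement in rigor rather than a different argument.
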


\begin{proof}
Combining Lemma \ref{hessestimate}, Lemma \ref{detest2} and the arithmetic mean inequality, we obtain
\[
    	\begin{aligned}
    		\left|\operatorname{det} D \Phi_r(\bar{x} )\right| &\leq \operatorname{det} \left(  g+r\operatorname{Hess} u(\bar{x} ) \right)\\
    		=& \dfrac{1}{\mathrm{det}A\left( \bar{x} \right)}\mathrm{det}
    		\left[A\left( \bar{x} \right) \circ \left(g+r\operatorname{Hess} u\left( \bar{x} \right) \right)\right] \\ 
    		\leq& \dfrac{1}{\mathrm{det}A\left( \bar{x} \right)}
    		\left(\dfrac{\mathrm{tr}\left( A\left( \bar{x} \right) \left(g+r\operatorname{Hess} u\left( \bar{x} \right) \right)\right) }{n}\right)^{n}\\
    		=& \dfrac{1}{\mathrm{det}A\left( \bar{x} \right)}
    		\left(\dfrac{\left\langle A\left( \bar{x} \right), g\right\rangle }{n}+\dfrac{ r\left\langle A\left( \bar{x} \right), \operatorname{Hess} u\left( \bar{x} \right) \right\rangle}{n}\right)^{n}\\
    		\leq&\dfrac{1}{\mathrm{det}A\left( \bar{x} \right)}
    		\left(\dfrac{\mathrm{tr}_{g}A\left( \bar{x} \right)}{n}+r\left( \mathrm{det}A\left( \bar{x} \right) \right) ^{\frac{1}{n-1}}\right)^{n}.
    	\end{aligned}
    \]
\end{proof}

  With the above estimates, we now give the proof of Theorem\ref{main2}.\\
    
\noindent\textbf{\emph{Proof of Theorem \ref{main2}.}} By Lemma \ref{caot} and Lemma \ref{refined}, we have
\[
\begin{aligned}
\left|  \{p\in M|d(x,p)<r\text{ for all }x\in D \}\right|&\leq \int_{A_{r}}\left|\operatorname{det} D \Phi_r(\bar{x} )\right|dV(\bar{x})\\
&\leq \int_{U}\frac{1}{\operatorname{det}(x)}\left( \frac{\operatorname{tr}_{g}A(x)}{n}+r\left(\operatorname{det}A(x)\right)^{\frac{1}{n-1}} \right)^{n}dV(x)
\end{aligned}
\]
for all $r>0$. Dividing both sides by $r^{n}$ and letting $r\to +\infty$, we obtain
\[
|B^{n}|\theta\leq \int_{U}\left(\operatorname{det}A(x)\right)^{\frac{1}{n-1}}\leq \int_{D}\left(\operatorname{det}A(x)\right)^{\frac{1}{n-1}}.
\]
This is exactly our goal \eqref{goal2}. Thus we have proved Theorem \ref{main2}.   \qed

\end{document}